\newcommand{\E}{{\bf{E}}}
\newcommand{\PP}{{\bf{P}}}
\newtheorem{tm}{Theorem}
\newtheorem{lem}{Lemma}
\begin{document}

\parindent=0pt

\smallskip
\par\vskip 3.5em
\centerline{\Large \bf Connectivity threshold  for superpositions}
\centerline{\Large \bf   of Bernoulli random graphs. II}

\vglue2truecm

\vglue1truecm
\centerline {Mindaugas Bloznelis, Dominykas Marma, Rimantas Vaicekauskas}

\bigskip

\centerline{Institute of Computer Science, Vilnius University}
\centerline{
\, \ Didlaukio 47, LT-08303 Vilnius, Lithuania} 

\vglue2truecm

\abstract{Let $G_1,\dots, G_m$ be independent
Bernoulli random subgraphs of the complete graph ${\cal K}_n$ having
variable sizes $X_1,\dots, X_m\in \{0,1,2,\dots\}$  and densities $Q_1,\dots, Q_m\in [0,1]$. 
Letting $n,m\to+\infty$ we establish the connectivity threshold for the union 
$\cup_{i=1}^mG_i$ defined on the vertex set of ${\cal K}_n$. 
Assuming that $(X_1,Q_1), (X_2,Q_2),\dots, (X_m,Q_m)$
are 
 independent
 identically distributed bivariate random variables
 and $\ln n -\frac{m}{n}\E\bigl(X_1(1-(1-Q_1)^{|X_1-1|}\bigr)\to c$ 
 we show that
 $\PP\{\cup_{i=1}^mG_i$ is connected$\}\to e^{-e^c}$.
 The result extends to the case of non-identically  distributed random variables 
$(X_1,Q_1),\dots, (X_m,Q_m)$ as well.
}
\smallskip
\par\vskip 3.5em
\section{Introduction}

Let $G_1,\dots, G_m$ be independent Bernoulli random subgraphs of the complete graph ${\cal K}_n$ having (random) sizes $X_1,\dots, X_m$ and densities $Q_1,\dots, Q_m$. More precisely, given a sequence of independent bivariate random vectors $(X_1,Q_1),\dots, (X_m,Q_m)$,
each $G_k=({\cal V}_k,{\cal E}_k)$ is obtained by  selecting 
a 
subset of vertices ${\cal V}_k$ of ${\cal K}_n$ of size 
$|{\cal V}_k|=X_k$ 
and retaining edges between selected vertices independently at random with probability $Q_k$.  In particular, each $G_k$  is a random graph on $X_k$ vertices, where every pair of vertices is linked by an edge independently at random with probability $Q_k$.
The union (superposition)    
$G_1\cup\cdots\cup G_m$
represents a network of (possibly) overlapping communities  $G_1,\dots, G_m$. It was  called community 
affiliation graph by  Yang and Leskovec \cite{Yang_Leskovec2012, Yang_Leskovec2014} and  has received considerable attention in the literature. We note that  Yang and Leskovec \cite{Yang_Leskovec2012, Yang_Leskovec2014} defined community memberships (i.e. the sets ${\cal V}_1,\dots, {\cal V}_m$) by design.
  In this paper we 
 study the community affiliation graph 
 $G=\bigl({\cal V},\cup_{k=1}^m{\cal E}_k\bigr)$ 
  in the case where
 the vertex sets ${\cal V}_1$, $\dots$, ${\cal V}_m$  of the  communities $G_1,\dots, G_m$ are drawn independently at random from the vertex set
  ${\cal V}=[n]$ of ${\cal K}_n$. 
 We mention that in the range of parameters  $m=m(n)=\Theta(n)$
 the random graph $G$ 
 admits a power law  (asymptotic) degree distribution  and non-vanishing global clustering coefficient \cite{Lasse_Mindaugas2019}.
 The phase transition in the size of the largest component has been shown in  \cite{Lasse_Mindaugas2019}. Here we consider the parametric range $m=\Theta(n\ln n)$ and address the property of full connectivity.

The paper is organized as follows. In the remaining part of this section we  rigorously define the random graph model, present our results and give a brief overview of related previous work. Proofs are postponed to the next section.

\subsection{Random community affiliation graph}

Given $n$ and $m$, let $(X_{n,1},Q_{n,1}),\dots, (X_{n,m},Q_{n,m})$ be independent bivariate random variables taking values in 
$\{0,1,\dots, n\}\times [0,1]$.
For each $i\in[m]$, the $i$-th community 
$G_{n,i}=({\cal V}_{n,i},{\cal E}_{n,i})$ is defined as follows. 
We firstly sample $(X_{n,i},Q_{n,i})$. Then, given $(X_{n,i},Q_{n,i})$,
we select the vertex set ${\cal V}_{n,i}\subset [n]$ of size $|{\cal V}_{n,i}|=X_{n,i}$ uniformly at random and link every pair of elements of ${\cal V}_{n,i}$ independently at random with probability $Q_{n,i}$. We study   the union graph $G_{[n,m]}=({\cal V}, {\cal E})$ with  
the vertex set ${\cal V}=[n]$ and the edge set 
${\cal E}=\cup_{i\in[m]}{\cal E}_{n,i}$. We say that  $G_{[n,m]}$ is 
defined by the sequence 
$(X_{n,1},Q_{n,1}),\dots, (X_{n,m},Q_{n,m})$.  In Theorems \ref{Theorem_1} and \ref{Theorem_2} below we assume that $m=m(n)\to+\infty$ as $n\to+\infty$ and study  a sequence of random graphs $G_{[n,m(n)]}$, $n=1,2,\dots$.

Our first result, Theorem \ref{Theorem_1} establishes the connectivity threshold for $G_{[n,m]}$ in the case where the contributing random graphs 
$G_{n,1},\dots, G_{n,m}$ are identically distributed. Let  $(X,Q), (X_1,Q_1),(X_2,Q_2),\dots$ be a sequence of  independent and identically distributed bivariate random variables. For each $n=1,2,\dots$ and $i=1,\dots, m$ we put $X_{n,i}=\min\{X_i,n\}$.

Introduce the function 
\begin{displaymath}
h(x,q)=1-(1-q)^{(x-1)_{+}},
\end{displaymath}
defined for $(x,q)\in[0;+\infty)\times[0;1]$.
 Here we denote $(x)_+=\max\{x,0\}$ and assign value $1$ to the expression  $0^0$. By ${\mathbb I}_{\cal A}$ we denote the indicator function of the condition (event) ${\cal A}$. We denote
\begin{displaymath}
\alpha
=
\E(Q{\mathbb I}_{\{X\ge 2\}}),
\qquad
\varkappa
=
\E (Xh(X,Q)),
\qquad
\lambda_{m,n}
=
\ln n-\varkappa\frac{m}{n}.
\end{displaymath}

\begin{tm}\label{Theorem_1}
Let $c$ be a real number.
Let $n\to+\infty$. 
Assume that $m=m(n)\to +\infty$. 
Assume that 
$\E \left(Xh(X,Q)\ln (X+1)\right)<\infty$
and $\E(Q{\mathbb I}_{\{X\ge 2\}})>0$.
Then 
\begin{equation}
\label{Theorem_1+}
\lim_{n\to+\infty}
\PP\{G_{[n,m]}{\rm{\ is \ connected \ }}\}
=
\begin{cases}
1,
\qquad 
\
{\rm{for}}
\quad \lambda_{m,n}\to-\infty;
\\
e^{-e^c},
\quad 
{\rm{for}}
\quad \lambda_{m,n}\to c;
\\
0,
\qquad 
\
{\rm{for}}
\quad \lambda_{m,n}\to+\infty.
\end{cases}
\end{equation}
\end{tm}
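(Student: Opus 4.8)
The plan is to show that, in this parameter range, full connectivity of $G_{[n,m]}$ is governed entirely by its isolated vertices. Write $N_0$ for the number of isolated vertices. The argument rests on two pillars: (i) a Poisson limit $N_0\xrightarrow{d}\mathrm{Poisson}(e^c)$ when $\lambda_{m,n}\to c$, with $\E N_0\to0$ when $\lambda_{m,n}\to-\infty$ and $N_0>0$ with high probability (w.h.p.) when $\lambda_{m,n}\to+\infty$; and (ii) a structural statement that, w.h.p., $G_{[n,m]}$ has no connected component of size $s$ with $2\le s\le n/2$. Granting both, the event $\{G_{[n,m]}\ \mathrm{connected}\}$ differs from $\{N_0=0\}$ only on an event of vanishing probability, and the three cases of \eqref{Theorem_1+} reduce to showing $\PP\{N_0=0\}\to1$, $\to e^{-e^c}$, and $\to0$, respectively.

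\textbf{First moment and the Poisson limit.} First I would compute the single-vertex isolation probability. Conditioning on $(X_{n,i},Q_i)=(x,q)$, a vertex $v$ receives no edge from community $i$ with probability $1-\tfrac{x}{n}h(x,q)$, since $v$ is either omitted from ${\cal V}_{n,i}$ or included but joined to none of the other $x-1$ members. Independence across communities and the identical distribution assumption then give $\PP\{v\ \mathrm{isolated}\}=(1-\varkappa_n/n)^m$ with $\varkappa_n=\E\bigl(X_nh(X_n,Q)\bigr)$, $X_n=\min\{X,n\}$, whence $\E N_0=n(1-\varkappa_n/n)^m$. Using $m\varkappa/n=\ln n-\lambda_{m,n}$ and taking logarithms yields $\E N_0\sim e^{\lambda_{m,n}}$, provided the truncation error $\tfrac{m}{n}(\varkappa-\varkappa_n)$ is negligible. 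This is precisely where the hypothesis $\E\bigl(Xh(X,Q)\ln(X+1)\bigr)<\infty$ enters: on $\{X>n\}$ one has $X_nh(X_n,Q)\le Xh(X,Q)$ and $\ln n\le\ln(X+1)$, so $(\ln n)(\varkappa-\varkappa_n)\le\E\bigl(Xh(X,Q)\ln(X+1)\,{\mathbb I}_{\{X>n\}}\bigr)\to0$, and therefore $\tfrac{m}{n}(\varkappa-\varkappa_n)\to0$ throughout the main range $m=O(n\ln n)$; the complementary ultra-dense case $m=\omega(n\ln n)$, possible only when $\lambda_{m,n}\to-\infty$, is dispatched by the crude bound $\E N_0\le n\,e^{-m\varkappa_n/n}$ together with $\varkappa>0$ (guaranteed by $\alpha>0$). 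For the Poisson limit I would use the method of factorial moments, showing $\E[(N_0)_r]=\sum^{\ast}\PP\{v_1,\dots,v_r\ \mathrm{isolated}\}\to(e^c)^r$ for each fixed $r$, where $\sum^{\ast}$ runs over ordered $r$-tuples of distinct vertices. The two-point function is the representative computation: one checks that the ratio $\PP\{u,v\ \mathrm{isolated}\}\big/\bigl(\PP\{u\ \mathrm{isolated}\}\,\PP\{v\ \mathrm{isolated}\}\bigr)\to1$ is equivalent to $m\,\E d\to0$, where $\E d$ is the per-community probability that $u$ and $v$ both acquire an edge. Bounding $\E d$ requires splitting the range of $X_n$ (around $n/\ln^2 n$) and again invoking the logarithmic moment to control large communities; the same estimate gives $\Var N_0=o\bigl((\E N_0)^2\bigr)$ and hence $N_0>0$ w.h.p.\ when $\lambda_{m,n}\to+\infty$.

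\textbf{Ruling out intermediate components.} The decisive step is (ii), and I expect it to be the main obstacle. If $G_{[n,m]}$ is disconnected with $N_0=0$, then some vertex set $S$ with $2\le|S|=s\le n/2$ is both \emph{separated} (no edge joins $S$ to $[n]\setminus S$) and internally connected. I would bound $\PP\{\exists\ \mathrm{such}\ S\}\le\sum_{s=2}^{\lfloor n/2\rfloor}\binom{n}{s}\,\PP\{S\ \mathrm{separated\ and\ connected}\}$. For a fixed $S$, a community of type $(x,q)$ splitting as $a$ vertices inside $S$ and $x-a$ outside creates no crossing edge with conditional probability $(1-q)^{a(x-a)}$; averaging over the hypergeometric split and multiplying over the $m$ independent communities gives $\PP\{S\ \mathrm{separated}\}$. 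The key point is that any community straddling the cut with $x\ge2$ produces a crossing edge with probability at least $q$, so the ``no crossing'' probability is bounded away from $1$ by a quantity governed by $\alpha=\E(Q{\mathbb I}_{\{X\ge2\}})>0$; this makes $\PP\{S\ \mathrm{separated}\}$ exponentially small in $s$, while the extra requirement that $S$ be connected suppresses the smallest values of $s$ (for $s=2$ the two vertices must share a community, an event of probability $O(\ln n/n)$). Unlike the Erd\H{o}s--R\'enyi graph, edges here are not independent, and communities can be as large as $n$; I would therefore split the sum over $s$ into a bounded range, a range up to $o(n)$, and a range up to $n/2$, in each case combining the separation estimate with the connectivity constraint and using the $\ln(X+1)$ moment to tame the large communities.

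\textbf{Conclusion.} Assembling the pieces: when $\lambda_{m,n}\to c$, step (ii) gives $\PP\{\mathrm{connected}\}=\PP\{N_0=0\}+o(1)$ and step (i) gives $\PP\{N_0=0\}\to e^{-e^c}$; when $\lambda_{m,n}\to-\infty$, $\E N_0\to0$ forces $N_0=0$ w.h.p.\ and step (ii) yields connectivity w.h.p.; and when $\lambda_{m,n}\to+\infty$, $N_0>0$ w.h.p.\ forces disconnection. This proves \eqref{Theorem_1+}.
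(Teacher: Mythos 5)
Your overall architecture coincides with the paper's: a Poisson limit for the number of isolated vertices via factorial moments, plus a structural lemma excluding components of size $2\le s\le n/2$, and your first pillar (including the truncation control of $\frac{m}{n}(\varkappa-\varkappa_n)$ through the logarithmic moment) is essentially the paper's Lemma \ref{Lemma_2}. The genuine gap sits exactly where you predicted ``the main obstacle'': ruling out small components. For fixed $s\ge 2$ the separation estimate alone gives $\binom{n}{s}\PP\{[s] \text{ separated}\}\approx e^{s\delta}/s!=O(1)$, not $o(1)$, so everything hinges on exploiting internal connectivity quantitatively; but the one concrete estimate you offer — that for $s=2$ the two vertices share a community with probability $O(\ln n/n)$ — is false under the theorem's hypotheses, because $\E\left(Xh(X,Q)\ln(X+1)\right)<\infty$ does not imply $\E\left(X^2Q{\mathbb I}_{\{X\ge 2\}}\right)<\infty$ (take $Q\equiv 1$ and $\PP\{X=k\}\propto k^{-3}(\ln k)^{-1}$: then $\E(({\tilde X})_2)\asymp \ln\ln n$ and the per-pair probability is of order $n^{-1}\ln n\ln\ln n$). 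The missing idea is the paper's two-part mechanism: (a) upgrade the integrability hypothesis, via uniform integrability, to an increasing function $\phi(t)\uparrow+\infty$ with $\E\left(X\phi(X)h(X,Q)\ln(1+X)\right)<\infty$ and $t/(\phi(t)\ln(1+t))$ increasing, which yields the key bound $\E\bigl({\tilde X}^{a+1}Q^a\bigr)\le n^a\zeta/(\phi(n)\ln(1+n))$; and (b) a union bound over spanning trees of $[s]$ (Cayley's formula), over ordered partitions of the tree's edges, and over assignments of the parts to distinct communities, which combined with (a) and the separation factor gives $S_0=O(1/\phi(n))=o(1)$ after choosing a slowly growing cutoff $\varphi_n$. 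Without this machinery (or an equivalent substitute), your plan to ``split the sum into ranges and tame large communities with the $\ln(X+1)$ moment'' has no engine to run on.

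Two secondary issues. First, your separation estimate is ``governed by $\alpha$'', i.e.\ ${\hat q}_k\le 1-\alpha k/n$; that exponent is too weak in the intermediate range $s\le n^{\beta}$, since $\binom{n}{s}e^{-\alpha s m/n}\approx e^{s(1-\alpha/\varkappa)\ln n}$ diverges whenever $\alpha<\varkappa$ (the typical case). The paper instead uses the stronger $\varkappa$-based bound ${\hat q}_k\le 1-\frac{k}{n}{\tilde\varkappa}+\frac{2k}{n\ln n}$ for $k\le n^{\beta}$, reserving the $\alpha$-bound for $k\ge n^{\beta}$ where the improved binomial estimate $\binom{n}{k}\le e^{2k+(1-\beta)k\ln n}$ with $\beta\ge 1-\frac{\alpha}{2\varkappa}$ saves the day. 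Second, your conclusion for $\lambda_{m,n}\to-\infty$ invokes pillar (ii) in a regime your sketch never addresses: there $m/n$ may be much larger than $\ln n$, and all the estimates in (ii) are regime-dependent. The paper sidesteps this by citing the zero--one law of Ardickas and Bloznelis; in a self-contained treatment the cheapest repair is monotone coupling — keep only the first $m'\le m$ of the i.i.d.\ communities, with $m'$ chosen so that $\lambda_{m',n}\to c$, and let $c\to-\infty$ after $n\to+\infty$.
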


We observe that inequality $h(x,q)\ge q$, which holds for $x\ge 2$, $q\in[0,1]$ implies
 \begin{displaymath}
 \varkappa
 \ge 
 \E (XQ{\mathbb I}_{\{X\ge 2\}})
 \ge 
  \E (Q{\mathbb I}_{\{X\ge 2\}}).
 \end{displaymath}
 Hence,  the condition $\E (Q{\mathbb I}_{\{X\ge 2\}})>0$ of Theorem \ref{Theorem_1} implies $\varkappa>0$.
 We mention that the integrability condition $\E \left(Xh(X,Q)\ln (X+1)\right)<\infty$ can not be relaxed, see  \cite{Daumilas_Mindaugas2023}.
\bigskip

Our next result, Theorem \ref{Theorem_2} addresses the case 
 where the community sizes and strengths 
$(X_{n,1},Q_{n,1}),\dots, (X_{n,m},Q_{n,m})$ are not necessarily identically distributed. In particular,  some or all of the members of the sequence can be non random  bivariate vectors.
Before formulating the result we introduce some notation. Let $i_*$ be a random number uniformly distributed in $[m]$. We assume that $i_*$ is independent of the sequence $\{(X_{n,i},Q_{n,i})$, $i\in [m]\}$. The bivariate random variable 
$(X_{n,i_*},Q_{n,i_*})$ has probability distribution,
\begin{displaymath}
\PP\bigr\{X_{n,i_*}=x, Q_{n,i_*}\in [0,t]\bigl\}
=\frac{1}{m}\sum_{i\in[m]}\PP\{X_{n,i}=x, Q_{n,i}\in[0,t]\}.
\end{displaymath}
We denote this probability distribution $P_{n,m}$. We will assume that $P_{n,m}$ converges weakly to a probability distribution, say, $P$ on $\{0,1,2,\dots\}\times [0,1]$. 
With a little abuse of notation we denote by $(X,Q)$ a bivariate random variable with the distribution $P$.
Denote 
\begin{displaymath}
\kappa_n=\E\bigl(X_{n,i_*}h(X_{n,i_*},Q_{n,i_*})\bigr)
=\frac{1}{m}\sum_{i\in[m]}\E\bigl(X_{n,i}h(X_{n,i},Q_{n,i})\bigr)
\end{displaymath}
and
\begin{displaymath}
\lambda^*_{n,m}=\ln n-\frac{m}{n}\kappa_n.
\end{displaymath}

\begin{tm}\label{Theorem_2} Let $m,n\to+\infty$.
 Assume that   $(X_{n,i_*},Q_{n,i_*})$ converges in distribution to some
  bivariate random random variable $(X,Q)$ such that 
 $\E\bigl(Q{\mathbb I}_{\{X\ge 2\}}\bigr)>0$  and 
\begin{equation}
\label{2023-04-29+12}
\E \bigl(Xh(X,Q)\ln (1+X)\bigr)<\infty.
\end{equation} 
Assume, in addition, that
\begin{equation}
\label{2023-06-03}
\E \bigl(X_{n,i_*}h(X_{n,i_*},Q_{n,i_*})\ln (1+X_{n,i_*})\bigr)
\to
\E \bigl(Xh(X,Q)\ln (1+X)\bigr).
\end{equation} 

Then 
\begin{equation}
\label{Theorem_2+}
\lim_{n\to+\infty}
\PP\{G_{[n,m]}{\rm{\ is \ connected \ }}\}
=
\begin{cases}
1,
\qquad 
\
{\rm{for}}
\quad \lambda^*_{m,n}\to-\infty;
\\
e^{-e^c},
\quad 
{\rm{for}}
\quad \lambda^*_{m,n}\to c;
\\
0,
\qquad 
\
{\rm{for}}
\quad \lambda^*_{m,n}\to+\infty.
\end{cases}
\end{equation}
\end{tm}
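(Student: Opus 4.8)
The plan is to follow the classical two-step scheme for connectivity thresholds, run in parallel with the proof of Theorem \ref{Theorem_1}: first show that isolated vertices are the only relevant obstruction to connectivity, and then show that their number is asymptotically Poisson with mean $e^{c}$. Write $a_{i}=\E\bigl(X_{n,i}h(X_{n,i},Q_{n,i})\bigr)$, so that $\tfrac1m\sum_{i\in[m]}a_{i}=\kappa_{n}$, and let $W$ denote the number of isolated vertices of $G_{[n,m]}$. The only structural change from the identically distributed case is that expectations against a single common law are everywhere replaced by the averages $\tfrac1m\sum_{i\in[m]}$ over the individual community laws. The hypotheses --- weak convergence of $(X_{n,i_*},Q_{n,i_*})$ to $(X,Q)$ and the logarithmic-moment convergence \eqref{2023-06-03} --- are used precisely to guarantee that these averages behave like the integrals against the limit law $P$ and, above all, that the tail contribution of atypically large communities is uniformly negligible (uniform integrability of $X_{n,i_*}h(X_{n,i_*},Q_{n,i_*})\ln(1+X_{n,i_*})$); the non-degeneracy $\E\bigl(Q{\mathbb I}_{\{X\ge 2\}}\bigr)>0$ guarantees $\kappa_{n}\to\varkappa>0$, so that $\tfrac{m}{n}\kappa_{n}$ is a genuine competitor of $\ln n$ and the threshold is non-trivial.

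I would first treat the critical window $\lambda^{*}_{n,m}\to c$. Conditioning on $(X_{n,i},Q_{n,i})=(x,q)$, a fixed vertex receives no edge inside $G_{n,i}$ with probability $1-\tfrac{x}{n}h(x,q)$: either it misses the community (probability $1-x/n$), or it lies in it but is joined to none of its $x-1$ potential neighbours (probability $(1-q)^{(x-1)_{+}}$). Independence of the communities gives
\[
\PP\{\text{a fixed vertex is isolated}\}=\prod_{i\in[m]}\Bigl(1-\tfrac{a_{i}}{n}\Bigr),
\]
and taking logarithms together with $\tfrac{m}{n}\kappa_{n}=\ln n-\lambda^{*}_{n,m}$ yields $n\,\PP\{\text{isolated}\}=\exp\bigl(\lambda^{*}_{n,m}+o(1)\bigr)\to e^{c}$, the error terms $\sum_{i}(a_{i}/n)^{2}$ and $\max_{i}a_{i}/n$ being forced to vanish by the uniform integrability above (via a dyadic decomposition in the community size, in which the $\ln(1+X)$ weight supplies the decisive gain). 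Hence $\E W\to e^{c}$. To upgrade this to $W\xrightarrow{d}\mathrm{Poisson}(e^{c})$ I would use the method of factorial moments: by symmetry the $r$-th factorial moment is $\E[(W)_{r}]=(n)_{r}\,\PP\{v_{1},\dots,v_{r}\ \text{are all isolated}\}$, and since the correlations between the isolation events of distinct vertices arise only from communities meeting more than one of $v_{1},\dots,v_{r}$ --- whose aggregate effect again vanishes by the same logarithmic-moment control --- the joint probability factorises up to a factor $1+o(1)$, so that $\E[(W)_{r}]\to(e^{c})^{r}$ for every fixed $r$. Consequently $\PP\{W=0\}\to e^{-e^{c}}$.

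The second and more delicate step is to show that, with probability $1-o(1)$, $G_{[n,m]}$ has no connected component of size $k$ with $2\le k\le n/2$; together with the previous step this identifies the event $\{G_{[n,m]}\ \text{is connected}\}$ with $\{W=0\}$ up to an event of vanishing probability and settles the critical case. I would bound the expected number of such components by the first moment $\sum_{k=2}^{\lfloor n/2\rfloor}\binom{n}{k}\,\PP\{S\ \text{is separated}\}$, where $S$ is a fixed $k$-set and separation means that no community puts an edge across the cut $(S,[n]\setminus S)$; conditioning community by community reduces $\PP\{S\ \text{is separated}\}$ to a product over $i\in[m]$ of hypergeometric averages of the factors $(1-Q_{n,i})^{j(X_{n,i}-j)}$, with $j=|{\cal V}_{n,i}\cap S|$. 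The main obstacle is that a single large community can carve out a separated set cheaply, so that the contribution of large communities threatens to make the $k$-sum diverge; this is exactly the point at which the logarithmic-moment conditions \eqref{2023-04-29+12} and \eqref{2023-06-03} enter, and, as the remark following Theorem \ref{Theorem_1} notes, they cannot be relaxed. I expect to control this contribution by splitting the sum according to the size of the largest community meeting $S$ and exploiting the $\ln(1+X)$ factor to render the resulting series summable.

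Finally, the regimes $\lambda^{*}_{n,m}\to\pm\infty$ follow from the critical case by a monotone-coupling argument. Adjoining an independent community only enlarges the edge set, so $\PP\{G_{[n,m]}\ \text{is connected}\}$ is non-decreasing in $m$; deleting, respectively adjoining, a uniformly random sub-collection of communities --- which preserves the limit law $P$ and the convergence \eqref{2023-06-03} --- lets one sandwich the given sequence between critical windows with parameter $c\to-\infty$ and $c\to+\infty$, yielding the limits $1$ and $0$ respectively.
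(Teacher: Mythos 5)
Your outline for the critical window has the same skeleton as the paper's proof (Poisson limit for isolated vertices via factorial moments, plus a first-moment exclusion of components of size $2\le k\le n/2$), and your isolated-vertex computation $\prod_{i\in[m]}\bigl(1-\tfrac{a_i}{n}\bigr)$ together with the error control via $\sum_i a_i^2/n^2$ and $\max_i a_i/n$ is exactly what the paper does in Lemma \ref{Lemma_2*}, using the bounds (\ref{2023-10-26+1})--(\ref{2023-10-25}). However, your second step contains a genuine gap: you bound the expected number of components of size $k$ by $\sum_{k=2}^{\lfloor n/2\rfloor}\binom{n}{k}\PP\{S\ \text{is separated}\}$, and in the critical window this quantity does \emph{not} tend to zero. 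For fixed $k$ one has $\PP\{[k]\ \text{is separated}\}=\prod_{i\in[m]}{\hat q}_k^{[n,i]}=e^{-k(\ln n-\lambda^*_{m,n})+o(1)}$, so $\binom{n}{k}\PP\{[k]\ \text{is separated}\}\to e^{kc}/k!$; summing over $k\ge 2$ gives a positive constant, not $o(1)$. This is not an artifact of the bound: in the critical window the graph typically contains several isolated vertices, and any pair of them is a separated $2$-set, so separated sets of every bounded size genuinely exist with non-vanishing probability. The obstruction has nothing to do with large communities (it is already present when all communities have size $2$), so the remedy you propose (splitting by the size of the largest community meeting $S$) does not address it. What is needed, and what constitutes the technical core of the paper's Lemma \ref{Lemma_1*}, is to exploit that a component must also be \emph{internally connected}: for $2\le k\le\varphi_n$ the paper bounds $\PP\{{\cal B}_{[k]}\}$ by summing over spanning trees $T$ of $[k]$ and over ordered partitions of the edges of $T$ assigned to distinct communities (the events ${\cal T}({\tilde{\cal E}}_T,{\tilde t})$), which produces the analogue of the factor $p^{k-1}$ in the Erd\H{o}s--R\'enyi argument and yields the extra gain $1/\phi(n)\to 0$; only for $k\ge\varphi_n\to+\infty$ does the separation-only bound suffice.

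There is a second, smaller gap in your treatment of the regimes $\lambda^*_{m,n}\to\pm\infty$, which the paper does not prove at all but quotes from Theorem 1 of \cite{Daumilas_Mindaugas2023}. Your monotone coupling works cleanly in the subcritical direction: adjoining fresh communities drawn i.i.d.\ from the mixture $P_{n,m}$ preserves independence, the mixture law, and (\ref{2023-06-03}), and brings $\lambda^*$ down to any fixed $c$, giving $\limsup_n\PP\{\text{connected}\}\le e^{-e^c}$ for every $c$. But in the supercritical direction your deletion step is not justified: a \emph{uniformly random} sub-collection of $\{G_{n,1},\dots,G_{n,m}\}$ is not an instance of the model (conditionally on the random index set the communities are independent, but unconditionally they are exchangeable and dependent, and the realized mixture $\tfrac{1}{m'}\sum_{i\in S}P_{n,i}$ is random), so Theorem \ref{Theorem_2} cannot be applied to it, and the claim that random deletion ``preserves the limit law $P$ and the convergence (\ref{2023-06-03})'' is exactly what would have to be proved. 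Making this rigorous requires extracting a deterministic subsequence of index sets along which the empirical mixture still converges to $P$ with convergent moments --- a concentration/uniform-integrability argument that is delicate precisely when $-\lambda^*_{m,n}$ grows faster than $\ln n$ (then $m/m'\to\infty$, and the natural alternative of i.i.d.\ thinning degenerates the limit law to $\delta_0$, violating $\E\bigl(Q{\mathbb I}_{\{X\ge 2\}}\bigr)>0$). Either supply such an extraction argument or, as the paper does, invoke the zero--one law of \cite{Daumilas_Mindaugas2023} for these two regimes.
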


\subsection{Related previous work}
After the seminal work of Erd\H os and R\'enyi \cite{ErdosRenyi1959}  the connectivity thresholds for various  random graph models has attracted considerable attention in the literature and still remains an area of active research, see \cite{FriezeKaronski},
\cite{JansonLuczakRucinski2001}, \cite{Hofstad2017} and references therein.

An important property of the random graph model considered here is that in the (sparse) parametric regime $m=\Theta(n)$
it admits an asymptotic power-law degree distribution with a  tunable power-law exponent and non-vanishing global clustering coefficient,  \cite{Lasse_Mindaugas2019}.
It doesn't look surprising that the connectivity thershold (\ref{Theorem_1+}), (\ref{Theorem_2+}) occurs at the range $m=\Theta(n\ln n)$ (recall that the number of edges of the Erd\H{o}s-R\'enyi graph at the connectivity threshold  has the same order of magnitude). What is perhaps more interesting is that  
  the only characteristic that defines the  threshold is the mixed moment $\varkappa$ (respectively $\kappa_n$). 
For  a comparison,  the connectivity threshold of
  another class of inhomogeneous random graphs, where edges between vertices, say $u,v$, are inserted independently  with  probability, say $p_n(u,v)$, 
  is defined by 
  $\operatorname*{ess\,inf}$ of the properly scaled limit of $p_n$ as $n\to+\infty$, see \cite{DevroyeFraiman}. 

The present work complements the earlier study \cite{Daumilas_Mindaugas2023}, where the 
zero - one law for the connectivity property of 
the community affiliation graph has been 
established under the minimal moment conditions 
on the probability distribution of $(X,Q)$. 
Other previous  papers concerned with the connectivity threshold 
of the community affiliation graph have focused on the 
particular case where the contributing subgraphs 
(communities) $G_{n,i}$ were complete graphs 
(cliques). 
The connectivity threshold
(\ref{Theorem_1+}) has been reported in \cite{GodehardJaworskiRybarczyk2007}, where it was assumed 
 that $Q\equiv 1$ and 
$\exists M>0$ such that
$\PP\{X\le M\}=1$. We note that the approach adopted in  \cite{GodehardJaworskiRybarczyk2007} is different from ours. 
In the recent paper \cite{BergmanLeskela} the boundedness condition $\PP\{X\le M\}=1$ of \cite{GodehardJaworskiRybarczyk2007}  has been relaxed and 
the zero - one law for the connectivity property has been shown assuming the second moment condition $\E X^2<\infty$. 
The present paper gives a definite answer to the connectivity threshold problem for the community affiliation graph.

\section{Proofs}
In the proof we often use shorthand notation $G_i=G_{n,i}$ and ${\cal V}_i={\cal V}_{n,i}$, ${\cal E}_i={\cal E}_{n,i}$, $1\le i\le m$.

\subsection{Proof of Theorem \ref{Theorem_1}}

\begin{proof}[Proof of Theorem \ref{Theorem_1}]
In the case where $\lambda_{m,n}\to\pm\infty$ the zero - one law of (\ref{Theorem_1+})  is shown in  Proposition 1 of \cite{Daumilas_Mindaugas2023}.
Here we show (\ref{Theorem_1+}) in the case where $\lambda_{m,n}\to c$.

We note that $G$ is disconnected whenever it has a (connected)
component of size $k$ for some $1\le k\le n/2$.
Let $Y_k$ denote the number of connected components of size $k$ and 
${\cal A}_i=
\left\{\sum_{i\le k\le n/2}Y_k\ge 1\right\}$
denote the event that $G$ has a component of size $k$ for some $k\ge i$. The probability that $G$ is disconnected equals
\begin{displaymath}
\PP\{{\cal A}_1\}=\PP\{{\cal A}_2\cup\{Y_1\ge 1\}\}
=\PP\{Y_1\ge 1\}
+\PP\{{\cal A}_2\setminus \{Y_1=1\}\}.
\end{displaymath}
We show in Lemma 
\ref{Lemma_1} below that 
$\PP\{{\cal A}_2\}=o(1)$. In Lemma \ref{Lemma_2}
we show that the random variable $Y_1$ ($=$ the number of isolated vertices) converges in distribution to the Poisson distribution with mean value $e^c$ 
(distribution ${\cal P}(e^{c})$).
In this way we obtain the asymptotic relation
\begin{align*}
1- \PP\{G{\text{\ is connected}}\}
=
\PP\{{\cal A}_1\}  
&
=
\PP\{Y_1\ge 1\}+o(1)
\\
&
=
1-\PP\{Y_1=0\}+o(1)
\\
&
=
1-e^{-e^c}+o(1).
\end{align*}
Hence $\PP\{G{\text{\ is connected}}\}
=
e^{-e^c}+o(1)$.
\end{proof}

Before the formulation and proof of  Lemmas \ref{Lemma_1} and \ref{Lemma_2} we collect some notation and auxiliary results.
Given integer $x\ge 2$ and  real number $q\in[0,1]$, let $V_x\subset {\cal V}$ be a random subset of size $|V_x|=x$. Let $G_x=(V_x,{\cal E}_x)$ be Bernoulli random graph with the vertex set $V_x$ and with edge probability $q$. Let $q_k(x,q)$ denote the probability that none of the edges of $G_x$ connect subsets $[k]$ and $[n]\setminus[k]$,
\begin{displaymath}
q_k(x,q)=\PP\bigl\{\forall\{u,v\}\in{\cal E}_x
\quad {\text{either}}
\quad u,v\in [k] \quad {\text{or}}
\quad
u,v\in[n]\setminus[k]
\bigr\}.
\end{displaymath}
We put $q_k(x,q)=1$ for $x=0,1$. We will use the following result of \cite{Daumilas_Mindaugas2023}.

\begin{lem}\label{Daumilo-lema}
For $1\le k\le n/2$, $2\le x\le n$ and $0\le q\le 1$ we have
\begin{align}
\label{2023-05-03}
q_k(x,q)
&
\le 
1-2\frac{k(n-k)}{n(n-1)}q,
\\
\label{2023-05-01+1}
q_k(x,q)
&
\le 
1-\left(\frac{k}{n}x-R_1-R_2\right)h(x,q),
\end{align}
where $R_1=\frac{k^2}{(n-k)^2}$ 
and 
$R_2=R_2(k,x)=e^{-\frac{k}{n}x}-1+\frac{k}{n}x$.
\end{lem}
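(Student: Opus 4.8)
The plan is to compute $q_k(x,q)$ exactly by conditioning on the random vertex set $V_x$, and then to extract both upper bounds from a single combinatorial observation. Write $A=|V_x\cap[k]|$ for the number of selected vertices landing in $[k]$; then $V_x$ has exactly $A(x-A)$ pairs straddling the cut $([k],[n]\setminus[k])$, and, given $V_x$, each such pair carries an edge independently with probability $q$. Hence none of the edges of $G_x$ crosses the cut with conditional probability $(1-q)^{A(x-A)}$, so that
\begin{displaymath}
q_k(x,q)=\E\bigl[(1-q)^{A(x-A)}\bigr],
\end{displaymath}
where $A$ has the hypergeometric distribution obtained by sampling $x$ vertices without replacement from the population $[n]$ with $k$ ``marked'' elements. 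Equivalently, $1-q_k(x,q)=\E\bigl[1-(1-q)^{A(x-A)}\bigr]$ is the probability that at least one crossing edge is present.

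For (\ref{2023-05-03}) I would lower bound this probability by exhibiting a single crossing edge. Choose a pair $\{U,W\}$ uniformly at random among the $\binom{x}{2}$ pairs inside $V_x$; a short symmetry computation shows that $\{U,W\}$ is in fact uniform over all $\binom{n}{2}$ pairs of $[n]$, so it straddles the cut with probability $k(n-k)/\binom{n}{2}=2k(n-k)/(n(n-1))$. Since the edge $\{U,W\}$ is present independently with probability $q$, the event that this particular pair is a present crossing edge already forces a crossing edge to exist, giving $1-q_k(x,q)\ge 2\frac{k(n-k)}{n(n-1)}q$. (At $x=2$ both steps are equalities, so the bound is tight, which is a useful sanity check.)

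For (\ref{2023-05-01+1}) the key observation is that on the event $\{1\le A\le x-1\}$, i.e.\ that $V_x$ meets both sides of the cut, the number of straddling pairs satisfies $A(x-A)\ge 1\cdot(x-1)=x-1$, because $a\mapsto a(x-a)$ is minimized over $\{1,\dots,x-1\}$ at the endpoints. Using $1-(1-q)^{t}\ge 1-(1-q)^{x-1}=h(x,q)$ for $t\ge x-1$, I would restrict the expectation to this event:
\begin{displaymath}
1-q_k(x,q)\ge h(x,q)\,\PP\{1\le A\le x-1\}=h(x,q)\bigl(1-\PP\{A=0\}-\PP\{A=x\}\bigr).
\end{displaymath}
It then remains to show $1-\PP\{A=0\}-\PP\{A=x\}\ge \frac{k}{n}x-R_1-R_2$. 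Bounding each factor of the hypergeometric product gives $\PP\{A=0\}\le(1-k/n)^x\le e^{-kx/n}$, whence $1-\PP\{A=0\}\ge 1-e^{-kx/n}=\frac{k}{n}x-R_2$ directly from the definition of $R_2$; and for $x\ge 2$ the two leading factors yield $\PP\{A=x\}\le (k/n)^2\le k^2/(n-k)^2=R_1$. Combining these proves (\ref{2023-05-01+1}).

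The estimation of the hypergeometric tails $\PP\{A=0\}$ and $\PP\{A=x\}$ is routine; the actual content, and the step I would be most careful about, is the reduction to the single deterministic inequality $A(x-A)\ge x-1$, valid precisely when both sides of the cut are occupied, together with the matching choice of the remainder terms, so that the exact identity $1-e^{-t}=t-(e^{-t}-1+t)$ and the crude bound $\PP\{A=x\}\le(k/n)^2$ line up with the stated expression $\frac{k}{n}x-R_1-R_2$. I do not expect a genuine obstacle beyond this bookkeeping.
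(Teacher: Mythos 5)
The paper itself does not prove this lemma: it is quoted from the companion paper \cite{Daumilas_Mindaugas2023} (Ardickas--Bloznelis), so there is no internal proof to compare yours against; I can only judge your argument on its own, and it is correct and complete. The conditioning identity $q_k(x,q)=\E\bigl[(1-q)^{A(x-A)}\bigr]$, with $A=|V_x\cap[k]|$ hypergeometric, is exactly right. For (\ref{2023-05-03}), the uniformly-chosen-pair argument is sound: conditionally on $V_x$, the chosen pair straddles the cut \emph{and} is present with probability $\frac{A(x-A)}{\binom{x}{2}}q$, and averaging over $V_x$ using $\E\bigl[A(x-A)\bigr]=k(n-k)\frac{x(x-1)}{n(n-1)}$ gives exactly $\frac{2k(n-k)}{n(n-1)}q$; this event forces a crossing edge, and your observation that equality holds at $x=2$ is a correct tightness check. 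For (\ref{2023-05-01+1}), the three ingredients all verify: $a\mapsto a(x-a)$ is minimized over $\{1,\dots,x-1\}$ at the endpoints, so $A(x-A)\ge x-1$ and the factor $h(x,q)$ appears on the event that both sides of the cut are occupied; $\PP\{A=0\}=\prod_{i=0}^{x-1}\frac{n-k-i}{n-i}\le(1-k/n)^x\le e^{-kx/n}$, which equals $1-\frac{k}{n}x+R_2$ rearranged, matching the definition of $R_2$; and $\PP\{A=x\}\le\frac{k}{n}\cdot\frac{k-1}{n-1}\le(k/n)^2\le\frac{k^2}{(n-k)^2}=R_1$ for $x\ge 2$ (and vanishes trivially when $x>k$ or $x>n-k$ in the other tail). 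Two small points worth noting, both in your favour: the conclusion needs no sign condition on $\frac{k}{n}x-R_1-R_2$, since $h(x,q)\ge 0$ and $\PP\{1\le A\le x-1\}\ge 0$ make the final inequality hold vacuously when that quantity is negative; and your bound $(k/n)^2$ on $\PP\{A=x\}$ is actually sharper than the stated remainder $R_1$ demands (the weaker estimate $\frac{k-i}{n-i}\le\frac{k}{n-k}$ for $k\le n/2$ would already suffice), so your proof has slack precisely where the lemma's constants do.
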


Let ${\hat q}_k$ denote the probability that
 $G_1$ has no edge with one endpoint in $[k]$ and another endpoint in 
$[n]\setminus[k]$. We denote for short 
${\tilde X}=\min\{n,X\}$ and ${\tilde X}_i=\min\{X_i,n\}$. We denote
\begin{displaymath}
{\tilde \varkappa}
=
\E \bigl({\tilde X}h({\tilde X},Q)\bigr),
\quad
{\tilde \lambda}_{m,n}
=
\ln n-{\tilde \varkappa}\frac{m}{n}.
\end{displaymath}
The following lemma is a simple consequence of Lemma \ref{Daumilo-lema}.

\begin{lem}\label{Lemma_3}
Let $c$ be a real number.
Assume that $\E(Xh(X,Q)\ln(X+1))<\infty$,
$\E(Q{\mathbb I}_{\{X\ge 2\}})>0$.
Assume that ${\tilde \lambda}_{m,n}\to c$ as $n\to+\infty$.
 Denote 
 $\delta_n
 =
 {\tilde \lambda}_{m,n}+\frac{2}{{\tilde \varkappa}}$ 
 and
 $\delta=2c+\frac{3}{\varkappa}$.
For any $0<\beta<1$ there exists $n_{\beta}>0$ depending on 
$\beta$ and on the probability distribution of 
$(X,Q)$ such that for 
$n>n_{\beta}$ we have 
\begin{equation}
\label{X16+1}
{\hat q}_k^{m}
\le 
e^{-k\ln n+k\delta_n},
\qquad
{\hat q}_k^{m-k}
\le 
e^{
-k\ln n
+
k\delta_n
+
\frac{k^2}{n}\ln n
},
\qquad
{\text{for}}
\qquad
1\le k\le n^{\beta}.
\end{equation}
We note that ${\tilde \lambda}_{m,n}\to c$ 
and 
${\tilde \varkappa}\to\varkappa$ 
implies $\delta_n\le \delta$ for sufficiently large $n$.\end{lem}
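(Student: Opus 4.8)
The plan is to estimate $\hat q_k=\E\bigl(q_k(\tilde X,Q)\bigr)$ from both sides, using that, conditionally on $(\tilde X_1,Q_1)=(x,q)$, the graph $G_1$ leaves the cut $([k],[n]\setminus[k])$ empty with probability exactly $q_k(x,q)$. Write $L=\E\bigl(Xh(X,Q)\ln(X+1)\bigr)<\infty$; by monotone convergence $\tilde\varkappa\to\varkappa<\infty$, so $\tilde\varkappa$ stays bounded and bounded away from $0$, and from $\tilde\lambda_{m,n}\to c$ we have $m\le 2n\ln n/\tilde\varkappa$ for large $n$.

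For the first bound in (\ref{X16+1}) I take expectations in the pointwise inequality (\ref{2023-05-01+1}) and use $\E\bigl(\tilde X h(\tilde X,Q)\bigr)=\tilde\varkappa$ to obtain
\begin{displaymath}
1-\hat q_k\ge \tfrac kn\tilde\varkappa-R_1\E\bigl(h(\tilde X,Q)\bigr)-\E\bigl(R_2(k,\tilde X)h(\tilde X,Q)\bigr).
\end{displaymath}
Combining $\ln\hat q_k\le-(1-\hat q_k)$ with the identity $\tfrac mn\tilde\varkappa=\ln n-\tilde\lambda_{m,n}$ gives
\begin{displaymath}
m\ln\hat q_k\le -k\ln n+k\tilde\lambda_{m,n}+m\Bigl(R_1\E\bigl(h(\tilde X,Q)\bigr)+\E\bigl(R_2(k,\tilde X)h(\tilde X,Q)\bigr)\Bigr).
\end{displaymath}
It therefore suffices to prove that the error term is $o(k)$ uniformly over $1\le k\le n^\beta$, for then (as $2/\tilde\varkappa$ is bounded below) it is $\le 2k/\tilde\varkappa$ for large $n$, yielding $\hat q_k^{m}\le e^{-k\ln n+k\delta_n}$. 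The contribution of $R_1=k^2/(n-k)^2$ is immediate: $mR_1\E(h)\le mk^2/(n-k)^2=o(k)$.

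The crux is the term $m\E\bigl(R_2 h\bigr)$ with $R_2(k,x)=e^{-kx/n}-1+kx/n$. Bounding $R_2\le\tfrac12(kx/n)^2$ would cost a factor $\E(\tilde X^2h)$, which need not stay finite, so this is exactly where $L<\infty$ must be used. I use $R_2(k,x)\le\frac kn x\min\bigl\{1,\frac{kx}{2n}\bigr\}$ together with the elementary estimate
\begin{displaymath}
\min\Bigl\{1,\tfrac{kx}{2n}\Bigr\}\le\frac{1}{(1-\beta)\ln n}\,\ln(x+1),\qquad 1\le x\le n,\quad 1\le k\le n^\beta,
\end{displaymath}
which follows from monotonicity of $x/\ln(x+1)$ and $\ln(2n/k)\ge(1-\beta)\ln n$. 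Splitting at a threshold $T_n\to\infty$ with $T_n\ln n=o(n^{1-\beta})$ (say $T_n=\ln n$): on $\{\tilde X\le T_n\}$ the quadratic bound gives $\E\bigl(R_2h\,{\mathbb I}_{\tilde X\le T_n}\bigr)\le \frac{k^2T_n}{2n^2}\tilde\varkappa$, so after multiplying by $m$ this is $O\bigl(k^2T_n\ln n/n\bigr)=o(k)$; on $\{\tilde X> T_n\}$ the displayed estimate gives $\E\bigl(R_2h\,{\mathbb I}_{\tilde X> T_n}\bigr)\le\frac{k}{(1-\beta)n\ln n}\,\E\bigl(\tilde X h\ln(\tilde X+1){\mathbb I}_{\tilde X>T_n}\bigr)$, whose truncated moment tends to $0$, so this too is $o(k)$ after multiplying by $m$. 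This proves the first bound.

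For the second bound I write $\hat q_k^{m-k}=\hat q_k^{m}\,\hat q_k^{-k}$ and show $\hat q_k\ge n^{-k/n}$, i.e.\ $-\ln\hat q_k\le\frac kn\ln n$. This needs an upper estimate of $1-q_k$ sharper than the union bound. Writing $a=|V_x\cap[k]|$, so that no crossing edge is present with probability $(1-q)^{a(x-a)}$, the factorization $1-(1-q)^{a(x-a)}\le a\bigl(1-(1-q)^{x-a}\bigr)\le a\,h(x,q)$ together with $\E(a)=\frac kn x$ gives the clean bound $1-q_k(x,q)\le\frac kn x\,h(x,q)$, hence $1-\hat q_k\le\frac kn\tilde\varkappa$. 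Since $\tilde\varkappa\le\ln n$ for large $n$, we get $-\ln\hat q_k\le-\ln\bigl(1-\frac kn\tilde\varkappa\bigr)\le\frac kn\ln n$, which combined with the first bound yields $\hat q_k^{m-k}\le e^{-k\ln n+k\delta_n+\frac{k^2}{n}\ln n}$. Finally $\delta_n=\tilde\lambda_{m,n}+2/\tilde\varkappa\to c+2/\varkappa$, so $\delta_n\le\delta$ for all large $n$. The single real obstacle is the uniform control of $m\E(R_2h)$ up to $k=n^\beta$: lacking a second moment one must trade the surplus power of $\tilde X$ against the logarithmic weight, choosing the truncation so that the quadratic bulk and the logarithmic-moment tail are simultaneously $o(k)$.
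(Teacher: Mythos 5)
Your proof is correct, and for the first inequality in (\ref{X16+1}) it follows the same skeleton as the paper: take expectations in inequality (\ref{2023-05-01+1}) of Lemma \ref{Daumilo-lema}, then exponentiate via $1+x\le e^x$. The difference lies in what is proved versus cited. The paper's entire proof consists of quoting the intermediate bound (\ref{X16}), ${\hat q}_k \le 1-\frac{k}{n}{\tilde \varkappa}+\frac{k}{n}\frac{2}{\ln n}$, and deferring the real work (showing that $m\bigl(R_1\E (h)+\E(R_2 h)\bigr)=o(k)$ uniformly in $k\le n^{\beta}$ when only the logarithmic moment, not a second moment, is available) to formula (27) of \cite{Daumilas_Mindaugas2023}. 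You supply this work yourself, and your truncation argument, with the quadratic bound $R_2\le k^2x^2/(2n^2)$ on $\{{\tilde X}\le \ln n\}$ and the bound $R_2\le \frac{k}{n}x\cdot\frac{\ln(1+x)}{(1-\beta)\ln n}$ on the tail, is essentially the mechanism used in that reference (compare the split over $\{\frac{k}{n}X_{n,i_*}\le\tau\}$ in the analogous non-i.i.d.\ estimate), so this part is the same approach made self-contained. You genuinely diverge on the second inequality: the paper obtains ${\hat q}_k^{m-k}$ by raising (\ref{X16}) directly to the power $m-k$, the surplus term $k\cdot\frac{k}{n}\bigl({\tilde\varkappa}-\frac{2}{\ln n}\bigr)\le\frac{k^2}{n}\ln n$ being absorbed into the last exponent, so that no lower bound on ${\hat q}_k$ is ever needed; you instead prove the separate lower bound ${\hat q}_k\ge 1-\frac{k}{n}{\tilde\varkappa}$ via the factorization $1-(1-q)^{a(x-a)}\le a\,h(x,q)$ with $a=|V_x\cap[k]|$, and then bound ${\hat q}_k^{-k}$. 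Both routes are valid; yours costs an extra (nice) observation, while the paper's is shorter. Two small repairs to yours: the step $-\ln\bigl(1-\frac{k}{n}{\tilde\varkappa}\bigr)\le\frac{k}{n}\ln n$ needs $-\ln(1-t)\le 2t$ for $t\le\frac12$, so the condition to quote is $2{\tilde\varkappa}\le\ln n$ rather than ${\tilde\varkappa}\le\ln n$ (harmless for large $n$); and your closing claim $\delta_n\le\delta$, exactly like the paper's own remark, implicitly uses $c+2/\varkappa\le 2c+3/\varkappa$, i.e.\ $c+1/\varkappa\ge0$, which neither you nor the paper verifies.
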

\begin{proof}[Proof of Lemma \ref{Lemma_3}]
To show (\ref{X16+1}) we combine inequality $1+x\le e^x$ with  the inequality
\begin{equation}
\label{X16}
{\hat q}_k
\le 
1-\frac{k}{n}{\tilde \varkappa}+\frac{k}{n}\frac{2}{\ln n},
\end{equation}
which holds for sufficiently large $n$. Inequality (\ref{X16}) is a simple consequence of Lemma \ref{Daumilo-lema}, see  also formula {\color{red}\bf (27)} of
\cite{Daumilas_Mindaugas2023}.
\end{proof}

\begin{lem}
\label{Lemma_1} 
Let $c$ be a real number.
Let $n\to+\infty$. 
Assume that 
$m=m(n)\to\infty$.
Assume that $\E(Xh(X,Q)\ln(X+1))<\infty$
and $\E(Q{\mathbb I}_{\{X\ge 2\}})>0$.
Assume that ${\tilde \lambda}_{m,n}\to c$.
Then
\begin{equation}
\label{component2}
\PP\{G_{[n,m]} \rm{\ has\  a \ component\  on} \  k\ \rm{vertices \ for \ some} \  2\le k\le n/2\}=o(1).
\end{equation}   
We remark that 
condition ${\tilde \lambda}_{m,n}\to c$ can be replaced by 
$\lambda_{m,n}\to c$.
\end{lem}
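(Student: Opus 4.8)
The plan is to run the first--moment method on the number $Y_k$ of connected components of size $k$. Since $G_{[n,m]}$ has a component on $k$ vertices for some $2\le k\le n/2$ only if $\sum_{2\le k\le n/2}Y_k\ge 1$, Markov's inequality reduces (\ref{component2}) to proving $\sum_{k=2}^{n/2}\E Y_k=o(1)$. The model is invariant under permutations of $[n]$, so $\E Y_k=\binom{n}{k}\PP\{[k]\text{ is a component}\}$, and the basic estimate is $\PP\{[k]\text{ is a component}\}\le\PP\{[k]\text{ sends no edge to }[n]\setminus[k]\}={\hat q}_k^{\,m}$. I would split the range $2\le k\le n/2$ at $k=n^\beta$ for a $\beta\in(0,1)$ chosen below, and treat the two parts with the two inequalities of Lemma \ref{Daumilo-lema}. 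Note already that the naive bound cannot succeed uniformly: combining ${\hat q}_k^{\,m}\le e^{-k\ln n+k\delta}$ with $\binom{n}{k}\le(en/k)^k$ gives $\binom{n}{k}{\hat q}_k^{\,m}\le e^{k(1-\ln k+\delta)}$, which is summable but whose sum is only $O(1)$, the obstruction coming entirely from the finitely many smallest $k$.

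For $2\le k\le n^\beta$ Lemma \ref{Lemma_3} applies. Given $\varepsilon>0$ I would first fix $K=K(\delta)$ with $\sum_{k\ge K}e^{k(1-\ln k+\delta)}<\varepsilon$, disposing of the tail $K\le k\le n^\beta$ uniformly in $n$. For each fixed $k<K$ the crude bound is only $O(1)$ and must be sharpened using connectivity: a component on $[k]$ must, besides being isolated, contain an edge joining vertex $k$ to some $j\in[k-1]$. I would union bound over $j$ and over the community $i$ supplying this edge. Conditioning on the vertex sets ${\cal V}_1,\dots,{\cal V}_m$, the presence of the internal edge $\{k,j\}$ in $G_i$ and the absence of all crossing edges involve disjoint, hence conditionally independent, families of edge indicators; bounding the number of crossing pairs of $G_i$ from below by $2(X_i-k)_+$ this gives
\begin{displaymath}
\PP\{[k]\text{ is a component}\}\le (k-1)\,m\,\frac{\E\bigl(X(X-1)Q(1-Q)^{2(X-k)_+}\bigr)}{n(n-1)}\,{\hat q}_k^{\,m-1}.
\end{displaymath}
The crucial point is that the mixed moment $M_k=\E\bigl(X(X-1)Q(1-Q)^{2(X-k)_+}\bigr)$ is finite even when $\E(X^2Q)$ diverges: on $\{X\ge 2k\}$ one has $(X-1)Q(1-Q)^{2(X-k)}\le 2h(X,Q)$, bounding that part by $2\varkappa$, while $\{X<2k\}$ contributes at most $(2k)^2$. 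Since $\binom{n}{k}{\hat q}_k^{\,m}=O(1)$ and $m/n\sim \ln n/\varkappa$, this gives $\E Y_k=O(\ln n/n)\to0$ for every fixed $k$, and summing the finitely many $k<K$ yields $o(1)$.

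For $n^\beta<k\le n/2$ I would instead use (\ref{2023-05-03}), which gives ${\hat q}_k\le 1-2\alpha\,k(n-k)/(n(n-1))$ with $\alpha=\E(Q{\mathbb I}_{\{X\ge 2\}})>0$, whence ${\hat q}_k^{\,m}\le\exp(-2\alpha m\,k(n-k)/(n(n-1)))$. Using $n-k\ge n/2$ and $m/n\sim\ln n/\varkappa$, the logarithm of $\binom{n}{k}{\hat q}_k^{\,m}\le(en/k)^k{\hat q}_k^{\,m}$ is at most $k\bigl(1+(1-\beta)\ln n-\tfrac{\alpha}{\varkappa}(1+o(1))\ln n\bigr)$. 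Choosing $\beta\in(1-\alpha/\varkappa,\,1)$, which is possible since $\alpha>0$, makes the coefficient of $\ln n$ negative, so every term is at most $e^{-c'n^\beta\ln n}$ for some $c'>0$; the at most $n$ such terms contribute $o(1)$. Combining the three estimates gives $\sum_{k=2}^{n/2}\E Y_k=o(1)$.

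The step I expect to be the main obstacle is the refinement for small $k$: the naive first moment $\binom{n}{k}{\hat q}_k^{\,m}$ is genuinely only $O(1)$ there, so one must extract the extra $O(\ln n/n)$ factor from the connectivity constraint and, simultaneously, verify that the mixed moment produced stays finite under the hypothesis $\E\bigl(Xh(X,Q)\ln(1+X)\bigr)<\infty$; this is exactly where retaining the factor $(1-Q)^{2(X-k)_+}$ and comparing with $\varkappa=\E(Xh(X,Q))$ is essential. Matching the cutoff $\beta$ between the two regimes is a secondary issue, settled by $\alpha>0$. Finally, since $\tilde\varkappa\to\varkappa$ under the moment hypothesis, the condition ${\tilde\lambda}_{m,n}\to c$ may be freely replaced by $\lambda_{m,n}\to c$, as stated in the remark.
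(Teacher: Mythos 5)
Your proof is correct, and in the critical regime of small $k$ it takes a genuinely different, more elementary route than the paper. The two outer regimes essentially coincide with the paper's argument: your treatment of $n^{\beta}<k\le n/2$ is the paper's $S_2$ bound (inequality (\ref{2023-05-03}), $\binom{n}{k}\le e^{2k+(1-\beta)k\ln n}$, with $\beta$ admissible because $\alpha\le\varkappa$ and $\alpha>0$), and your mid-range bound $\binom{n}{k}{\hat q}_k^m\le e^{k(1-\ln k+\delta)}$ is the paper's $S_1$ estimate, except that the paper cuts at a slowly growing $\varphi_n\to+\infty$ so that this tail is $o(1)$ outright, while you cut at a fixed $K=K(\varepsilon)$ and finish with an $\varepsilon$-argument; both are valid. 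The real divergence is at small $k$, where the naive first moment is only $O(1)$: the paper decomposes the connectivity event over spanning trees of $[k]$ (Cayley's formula), over ordered partitions of the tree's edges, and over the communities covering each part, and gains only a factor $1/\phi(n)\to0$, where $\phi$ is an auxiliary slowly growing function manufactured from the integrability hypothesis via uniform-integrability (leading to the moment bound (\ref{XQ})). You instead extract a single guaranteed edge from vertex $k$ into $[k-1]$, union bound over the $(k-1)m$ choices of endpoint and supplying community, use conditional independence of that internal pair from the crossing pairs, and keep the resulting mixed moment finite by splitting at $X=2k$: on $\{X\ge 2k\}$ one has $2(X-k)\ge X-1$ and $(x-1)q(1-q)^{x-1}\le 1-(1-q)^{x-1}=h(x,q)$, so the moment is at most $2\varkappa+4k^2$. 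This gains a factor $O\bigl(k^3 m/n^2\bigr)=O\bigl(k^3\ln n/n\bigr)$, much stronger than $1/\phi(n)$, and it dispenses entirely with the tree combinatorics and with the construction of $\phi$; indeed, since $\sum_k k^3e^{k(1-\ln k+\delta)}<\infty$, your refined bound could be summed over the whole range $2\le k\le n^{\beta}$, eliminating the $\varepsilon$/$K$ split altogether. Two points to tidy. First, you need $\binom{n}{k}{\hat q}_k^{m-1}=O(1)\binom{n}{k}{\hat q}_k^{m}$; this holds because ${\hat q}_k\ge 1-k{\tilde\varkappa}/n$ (by (\ref{X19+}) and a union bound over $[k]$), which is bounded away from $0$ uniformly for $k\le n^{\beta}$; also the selection probability is $({\tilde X})_2/(n)_2$ with ${\tilde X}=\min\{X,n\}$, so your mixed moment should be written with ${\tilde X}$, which changes nothing in the estimate. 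Second, your justification of the closing remark is too quick: since $\lambda_{m,n}-{\tilde\lambda}_{m,n}=(\varkappa-{\tilde\varkappa})m/n$ and $m/n=\Theta(\ln n)$, mere convergence ${\tilde\varkappa}\to\varkappa$ does not suffice; one needs the rate $\varkappa-{\tilde\varkappa}=o(1/\ln n)$, which is exactly what the paper's display (\ref{varkappa}) extracts from the hypothesis $\E\bigl(Xh(X,Q)\ln(1+X)\bigr)<\infty$.
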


\begin{proof}[Proof of Lemma \ref{Lemma_1}]
In the proof we use the notation introduced in the proof of Theorem \ref{Theorem_1}. Before the proof we introduce some more notation and collect auxiliary results.
We denote 
 $\beta
 =
 \max\{1-\frac{\alpha}{2\varkappa}, \frac{1}{2}\}$. 
 For a subset $U\subset {\cal V}$ we denote by ${\cal B}_U$ the event that
$U$ induces a  connected component of 
$G_{[n,m]}$. 
Below we use the simple identity 
\begin{equation}
\label{Y}
Y_k=\sum_{U\subset V, |U|=k}{\mathbb I}_{{\cal B}_U}.
\end{equation}
We denote by ${\cal D}_U$  the event that
$G_{[n,m]}$ has no edges connecting vertex sets $U$ and 
${\cal V}\setminus U$. 
Note that 
$\PP\{{\cal B}_U\}\le \PP\{{\cal D}_U\}$.

We observe that the integrability condition 
$\E (Xh(X,Q)\ln(1+X))<\infty$ implies that there exists an increasing function 
$\phi(t)\uparrow+\infty$ as $t\to+\infty$
(depending on the distribution of $(X,Q)$) such that 
\begin{equation}
\E (X\phi(X)h(X,Q)\ln(1+X))<\infty.
\end{equation}
Moreover, one can choose $\phi(t)$ such that the function $t\to t/(\phi(t)\ln(1+t))$ was increasing for $t=1,2,\dots$ and $\phi(1)=1$.

Furthermore we  observe that 
$\zeta:=\E(X\phi(X)Q\ln(1+X))<\infty$. Indeed, the inequality $q\le h(x,q)$, which holds for $x\ge 2$, implies 
\begin{equation}
\label{U_1}
\E(X\phi(X)Q\ln(1+X)) 
\le
\PP\{X=1\}\ln 2
+
\E(X\phi(X){\mathbb I}_{\{X\ge 2\}}h(X,Q)\ln(1+X)) 
<\infty
\end{equation}
 (here we use the fact that  $X$ is an integer valued random variable). The fact that 
 $t\to t/(\phi(t)\ln (1+t))$ is increasing for $t\ge 1$ implies 
$t/(\phi(t)\ln(1+t))\le n/(\phi(n)\ln(1+n))$, for $t=1,2,\dots, n$. Using this inequality we estimate for $a=1,2,\dots$,
\begin{align*}
{\tilde X}^{a+1}
&
=
\frac{{\tilde X}}
{\phi({\tilde X})\ln (1+\tilde X)}
{\tilde X}^{a}\phi({\tilde X})\ln(1+{\tilde X})\\
&
\le 
\frac{n}{\phi(n)\ln(1+n)} {\tilde X}^a
\phi({\tilde X})\ln(1+\tilde X)
\\
&
\le 
\frac{n^a}{\phi(n)\ln(1+n)} {\tilde X}
\phi({\tilde X})\ln(1+\tilde X).
\end{align*}
The latter inequality implies the bound   
 \begin{equation}
 \label{XQ}
\E({\tilde X}^{a+1}Q^a)
\le 
\frac{n^a}{\phi(n)\ln (1+n)}
\zeta,
\qquad a=1,2,\dots, 
\end{equation}
 which we are going to use in the proof of (\ref{component2}).
 
 \bigskip
 Now we are ready to prove Lemma \ref{Lemma_1}. Let us show that 
 ${\tilde \lambda}_{n,m}\to c \Leftrightarrow  \lambda_{n,m}\to c$. Note that each of these relations yields $m/n=O(\ln n)$. 
 Furthermore, we have  $\lambda_{m,n}-{\tilde \lambda}_{m,n}= (\varkappa-{\tilde \varkappa})m/n$ and 
 \begin{eqnarray}
 \nonumber
 0
 \le
  \varkappa-{\tilde \varkappa}
&&
 \le
 \E\left(Xh(X,Q){\mathbb I}_{\{X>n\}}\right)
\\
\nonumber
&&
 \le
 \frac{1}{\ln(1+n)}
  \E\left(Xh(X,Q){\mathbb I}_{\{X>n\}}\ln(1+X)\right)
 \\
 \label{varkappa}
 &&
 =o\left(\frac{1}{\ln n}\right).
 \end{eqnarray}

 We conclude that each of the relations ${\tilde \lambda}_{n,m}\to c$ and  $\lambda_{n,m}\to c$ 
 implies
 $\lambda_{m,n}-{\tilde \lambda}_{m,n}=o(1)$. Hence 
  ${\tilde \lambda}_{n,m}\to c \Leftrightarrow  \lambda_{n,m}\to c$.
 
 \bigskip
 
 Let us show (\ref{component2}). 
By the union bound and (\ref{Y}), we have
 \begin{displaymath}
 \PP\{{\cal A}_2\}
 \le \sum_{2\le k\le n/2}\PP\{Y_k\ge 1\}
\le
 \sum_{2\le k\le n/2}
 \
 \sum_{U\subset {\cal V}, |U|=k}\PP\{{\cal B}_U\}
 =
 \sum_{2\le k\le n/2}\binom{n}{k}\PP\{{\cal B}_{[k]}\}.
 \end{displaymath}
 In the last step we used the fact that 
 $\PP\{{\cal B}_U\}=\PP\{{\cal B}_{[k]}\}$ for 
 $|U|=k$.
 Next, using the inequality 
 $\PP\{{\cal B}_{[k]}\}
 \le
  \PP\{{\cal D}_{[k]}\}$ 
  we upper bound
 $\PP\{{\cal A}_2\}\le S_0+S_1+S_2$, 
 where 
 \begin{displaymath}
 S_0=\sum_{2\le k\le \varphi_n}
 \binom{n}{k}
 \PP\{{\cal B}_{[k]}\},
\qquad
 S_1=\sum_{\varphi_n< k\le n^{\beta}}
 \binom{n}{k}\PP\{{\cal D}_{[k]}\},
 \qquad
 S_2=\sum_{n^{\beta}\le k\le n/2}
 \binom{n}{k}\PP\{{\cal D}_{[k]}\}.
  \end{displaymath}
Here $\beta=\max\left\{1-\frac{\alpha}{2{\varkappa}},\frac{1}{2}\right\}$.
The sequence $\varphi_n\to+\infty$  as 
$n\to+\infty$ 
will be specified latter. To prove  (\ref{component2}) we
 show that
 $S_i=o(1)$ for $i=0,1,2$.

\bigskip

{\it Proof of} $S_0=o(1)$.
Let us evaluate the probabilities 
$\PP\{{\cal B}_{[k]}\}$, $k\ge 2$.
Let $T=(V_T,{\cal E}_T)$ be a tree with the vertex set 
$V_T=[k]\subset {\cal V}$. 
 Fix an integer $1\le r\le k-1$.
Let 
${\tilde {\cal E}}_T=({\cal E}_T^{(1)},\dots, {\cal E}_T^{(r)})$ be an ordered partition of the set ${\cal E}_T$ 
(every set ${\cal E}_T^{(i)}$ is nonempty, 
${\cal E}_T^{(i)}\cap{\cal E}_T^{(j)}=\emptyset$ 
for $i\not=j$, 
and $\cup_{i=1}^r{\cal E}_T^{(i)}={\cal E}_T$).
Let ${\tilde t}=(t_1,\dots, t_r)\in [m]^r$ be a 
vector with integer valued coordinates satisfying $t_1<t_2<\cdots<t_r$. Given a pair
$({\tilde {\cal E}}_T, {\tilde t})$,
 let ${\cal T}({\tilde {\cal E}}_T, {\tilde t})$ denote the event 
 that ${\cal E}_T^{(i)}\subset {\cal E}_{t_i}$ 
 for each $1\le i\le r$. The event means that the  edges of
$T$ are covered by  the edges of $G_{t_1},\dots G_{t_r}$ so 
that
for every $i$ the edge set ${\cal E}_T^{(i)}$ belongs to the edge set ${\cal E}_{t_i}$ of $G_{t_i}$.
Let $H_{\tilde t}=[m]\setminus\{t_1,\dots, t_r\}$  and let 
${\cal I}(V_T, H_{\tilde t})$ be the event that 
none of the graphs $G_i$, $i\in H_{\tilde t}$ has an edge connecting some 
$v\in V_T$ and $w\in {\cal V}\setminus V_T$.

\medskip

Let ${\mathbb T}_k$ denote the set of trees on the vertex set $[k]$.
We have, by the union bound and independence 
of $G_1,\dots, G_m$, that
\begin{equation}
\label{suma}
\PP\{{\cal B}_{[k]}\}
\le
\sum_{T\in {\mathbb T}_k}
\sum_{({\tilde {\cal E}}_T,{\tilde t})}
\PP\{{\cal T}({\tilde {\cal E}}_T, {\tilde t})\}
\PP\{{\cal I}(V_T, H_{\tilde t})\}.
\end{equation}
Here the second sum runs over all possible pairs 
$({\tilde {\cal E}}_T, {\tilde t})$. Invoking the bound 
$\PP\{
{\cal I}(V_T, H_{{\tilde t}})\}\le e^{-k\ln n+k\delta+1}$,
see the second inequality of Lemma 
\ref{Lemma_3}, which holds for sufficiently 
large $n$ uniformly over 
${\tilde t}=(t_1,\dots, t_r)$ with $r\in [1,k]$ 
satisfying $k\le \ln n$, we obtain
\begin{equation}
\label{suma++++}
\PP\{{\cal B}_{[k]}\}
\le
e^{-k\ln n+k\delta+1}
\sum_{T\in {\mathbb T}_k}
S_T,
\qquad 
S_T
:=
\sum_{({\tilde {\cal E}}_T,{\tilde t})}
\PP\{{\cal T}({\tilde {\cal E}}_T, {\tilde t})\}
.
\end{equation}
We show below that
\begin{equation}
\label{S_T}
S_T
\le
\frac{1}{\phi(n)}
\sum_{r=1}^{k-1}
\left(\frac{2}{\varkappa}\right)^r
\zeta^re^rr^{k-1-r}.
\end{equation}
Invoking this inequality in (\ref{suma++++}) and using Cayley's formula $|{\mathbb T}_k|=k^{k-2}$ we obtain
\begin{displaymath}
\PP\{{\cal B}_{[k]}\}
\le
e^{-k\ln n+k\delta+1}
\frac{R_k}{\phi(n)},
\qquad
R_k:=
k^{k-2}\sum_{r=1}^{k-1}
\left(\frac{2}{\varkappa}\right)^r
\zeta^re^rr^{k-1-r}.
\end{displaymath}
Furthermore, using the simple bound $\binom{n}{k}\le \frac{n^k}{k!}=\frac{e^{k\ln n}}{k!}$ we estimate
\begin{displaymath}
S_0
\le 
\frac{1}{\phi(n)}
\sum_{k=2}^{\varphi_n}
e^{k\delta+1}\frac{R_k}{k!}.
\end{displaymath}
Finally,  choosing a nondecreasing (integer valued) sequence $\varphi_n\to+\infty$ as $n\to+\infty$ such that $\varphi_n\le \ln n$ and 
$\sum_{k=2}^{\varphi_n}
e^{k\delta+1}\frac{R_k}{k!}
=o(\phi(n))$ we obtain $S_0=o(1)$.

\bigskip

{\it Proof of (\ref{S_T}).}
Given a tree $T=([k],{\cal E}_T)$ and  partition
${\tilde {\cal E}}_T=({\cal E}_T^{(1)},\dots,{\cal E}_T^{(r)})$, let
$V_T^{(i)}$ be the set of vertices incident to the edges from 
${\cal E}_T^{(i)}$. We denote $e_i=|{\cal E}_T^{(i)}|$ and
$v_i=|V_T^{(i)}|$.
For any labeling 
${\tilde t}=(t_{1},\dots, t_{r})$ that assigns labels 
$t_1,\dots, t_r$ to the sets
${\cal E}_T^{(1)},\dots,{\cal E}_T^{(r)}$ we have, by the independence of $G_1,\dots, G_m$,
\begin{displaymath}
\PP\{{\cal T}({\tilde {\cal E}}_T, {\tilde t})\}
=
\prod_{i=1}^r
\E
\left(
\frac{({\tilde X}_{t_i})_{v_i}}{(n)_{v_i}}Q_{t_i}^{e_i}
\right).
\end{displaymath}
We note that the fraction $\frac{({\tilde X}_{t_i})_{v_i}}{(n)_{v_i}}$ is 
a decreasing function of $v_i$ and it is maximized by 
$\frac{({\tilde X}_{t_i})_{e_i+1}}{(n)_{e_i+1}}$ since
 we always have $v_i\ge e_i+1$. Indeed,  
 given $|{\cal E}_T^{(i)}|=e_i$ the smallest possible set of vertices
$V_T^{(i)}$ 
corresponds to the configuration of edges of 
${\cal E}_T^{(i)}$ that creates a subtree $(V_T^{(i)},{\cal E}_T^{(i)})\subset T$. Hence $v_i\ge e_i+1$. It follows that 
\begin{equation}
\label{Stirling}
\PP\{{\cal T}({\tilde {\cal E}}_T, {\tilde t})\}
\le
\prod_{i=1}^r
\E
\left(
\frac{({\tilde X}_{t_i})_{e_i+1}}{(n)_{e_i+1}}Q_{t_i}^{e_i}
\right)
\le 
\left(\frac{\zeta}{n\phi(n)\ln(1+n)}\right)^r.
\end{equation}
In the last step we used (\ref{XQ}).

Let us evaluate the sum
$S_T$.
For $1\le r\le  k-1$ and  
a vector $(e_1,\dots, e_r)$ with integer valued coordinates  satisfying $e_1+\cdots+e_r=k-1$ and $e_i\ge 1$ $\forall i$,
there are 
$\frac{(k-1)!}{e_1!\cdots e_r!}$
ordered
partitions 
${\tilde {\cal E}}_T=({\cal E}_T^{(1)},\dots,{\cal E}_T^{(r)})$ 
of ${\cal E}_T$
in $r$ non empty parts of sizes 
$|{\cal E}_T^{(1)}|=e_1,\dots, |{\cal E}_T^{(r)}|=e_r$. 
Therefore
we have
 \begin{align*}
 S_T
&
\le
\sum_{r=1}^{k-1}
\binom{m}{r}
\sum'_{e_1+\dots+e_r=k-1}
\frac{(k-1)!}{e_1!\cdots e_r!}
\
\left(\frac{\zeta}
{n\phi(n)\ln(1+n)}\right)^r
\\
&
\le
\sum_{r=1}^{k-1}
\binom{m}{r}
\left(\frac{\zeta}
{n\phi(n)\ln(1+n)}\right)^rr^{k-1}.
\end{align*}
Here the sum $\sum'_{e_1+\dots+e_r=k-1}$ runs over the set of vectors
$(e_1,\dots, e_r)$ having integer valued coordinates $e_i\ge 1$ 
satisfying $e_ 1+\cdots+e_r=k-1$.

Invoking inequalities $\binom{m}{r}\le \left(\frac{me}{r}\right)^r$ and 
$\frac{m}{n\ln(1+n)}\le \frac{2}{\varkappa}$ (the last inequality holds for sufficiently large $n$ and $m$) we arrive to (\ref{S_T})
\begin{displaymath}
S_T
\le
\frac{1}{\phi(n)}
\sum_{r=1}^{k-1}
\left(\frac{m}{n\ln(1+n)}\right)^r
\zeta^re^rr^{k-1-r}
\le
\frac{1}{\phi(n)}
\sum_{r=1}^{k-1}
\left(\frac{2}{\varkappa}\right)^r
\zeta^re^rr^{k-1-r}.
\end{displaymath}

\bigskip

{\it Proof of} $S_1=o(1)$. The first inequality of Lemma \ref{Lemma_2} implies
$\PP\{{\cal D}_{[k]}\}\le e^{-k\ln n+k\delta}$.
Invoking the inequality 
$\binom{n}{k}\le \frac{n^k}{k!}$ we obtain
\begin{displaymath}
S_1
\le 
\sum_{\varphi_n<k\le n^{\beta}}
\frac{e^{k\delta}}{k!}
\le 
\sum_{k>\varphi_n}
\frac{e^{k\delta}}{k!}.
\end{displaymath}
For $n\to+\infty$ we have $S_1=o(1)$ because
$\varphi_n\to+\infty$ and the series 
$\sum_{k\ge 1}
\frac{e^{k\delta}}{k!}$ converges.

\bigskip
{\it Proof of} $S_2=o(1)$.
The proof is much the same as that of the corresponding bound in the proof of Proposition 1 of \cite{Daumilas_Mindaugas2023}. In particular, we use the inequality
$\binom{n}{k}\le e^{2k+(1-\beta)k\ln n}$, which holds for $n^{\beta}\le k\le n/2$,  and the inequality 
\begin{displaymath}
{\hat q}_k\le 1-\alpha \frac{k}{n},
\qquad
1\le k\le n/2.
\end{displaymath}
Both inequalities are shown in the proof of  
Proposition 1 of \cite{Daumilas_Mindaugas2023}.
Combining these inequalities and $1+x\le e^x$ we estimate the product
\begin{equation}
\label{2023-10-16}
\binom{n}{k}\PP\{{\cal D}_k\}
=
\binom{n}{k}{\hat q}_k^m
\le e^{2k+(1-\beta)k\ln n-\alpha k\frac{m}{n}}
=
e^{k\left(2+\alpha\frac{{\tilde \lambda}_{m,n}}{{\tilde \varkappa}}\right)
-k\left(\frac{\alpha}{\tilde\varkappa}+\beta-1\right)\ln n}.
\end{equation}
In the last step we used 
$\frac{m}{n}=\frac{1}{\tilde\varkappa}(\ln n-{\tilde \lambda}_{m,n})$.
For $\lambda_{m,n}\to c$ and $\varkappa-{\tilde \varkappa}=o\left(\frac{1}{\ln n}\right)$, see  (\ref{varkappa}), 
the quantity on the right
of (\ref{2023-10-16}) becomes
$e^{-k\left(\frac{\alpha}{\varkappa}+\beta-1+o(1)\right)\ln n}$.
Since our choice of $\beta$ yields
$\frac{\alpha}{\varkappa}+\beta-1\ge \frac{\alpha}{2\varkappa}$, we have
$S_2
\le 
\sum_{k\ge n^{\beta}}
e^{-k\left(\frac{\alpha}{2\varkappa}+o(1)\right)\ln n}=o(1)$.

\end{proof}

\bigskip
\begin{lem}\label{Lemma_2}
Let $c$ be a real number. Let $\Lambda$ be a random variable having  the Poisson distribution with mean value $e^{c}$.
Assume that conditions of Lemma \ref{Lemma_1} hold. Then the number of vertices of degree $0$ in $G_{[n,m]}$ converges in distribution to
$\Lambda$.
\end{lem}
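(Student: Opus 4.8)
The plan is to prove Poisson convergence of $Y_1$ (the number of degree-$0$ vertices) by the method of factorial moments: since the law ${\cal P}(e^c)$ is determined by its moments, it suffices to show that $\E\bigl((Y_1)_r\bigr)\to e^{rc}$ for every fixed integer $r\ge 1$, where $(Y_1)_r=Y_1(Y_1-1)\cdots(Y_1-r+1)$. By symmetry and the independence of $G_1,\dots,G_m$ I would write $\E\bigl((Y_1)_r\bigr)=(n)_r\,\pi_r^{\,m}$, where $(n)_r=n(n-1)\cdots(n-r+1)$ and $\pi_r$ is the probability that $r$ fixed distinct vertices are \emph{all} isolated inside one community. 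Conditioning a single community on $(X,Q)$, a vertex $v$ fails to be isolated only if $v$ belongs to the community and is joined to a member, so $\pi_1=\E\bigl(1-\tfrac{{\tilde X}}{n}h({\tilde X},Q)\bigr)=1-\tfrac{{\tilde \varkappa}}{n}$.

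For $r=1$ I would write $\E Y_1=n\pi_1^{\,m}=\exp\bigl(\ln n+m\ln(1-{\tilde \varkappa}/n)\bigr)$ and expand the logarithm. The leading term is $-m{\tilde \varkappa}/n=-(\ln n-{\tilde \lambda}_{m,n})$, while the remainder is $O\bigl(m{\tilde \varkappa}^2/n^2\bigr)=O\bigl(\tfrac{\ln n}{n}\bigr)=o(1)$, because $m/n=O(\ln n)$ and ${\tilde \varkappa}\to\varkappa<\infty$ under the hypotheses of Lemma \ref{Lemma_1}. Hence $\ln n+m\ln(1-{\tilde \varkappa}/n)={\tilde \lambda}_{m,n}+o(1)\to c$, giving $\E Y_1\to e^c$.

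For general $r$ I would compare $\E\bigl((Y_1)_r\bigr)=(n)_r\pi_r^{\,m}$ with $(\E Y_1)^r=n^r\pi_1^{\,rm}$. As $(n)_r/n^r\to1$, it remains to show $\bigl(\pi_r/\pi_1^{\,r}\bigr)^m\to1$, i.e. $m\bigl(\ln\pi_r-r\ln\pi_1\bigr)\to0$. Put $a=1-\pi_1={\tilde \varkappa}/n$ and let $b$ be the probability that two fixed vertices are both non-isolated in one community. A union bound gives $1-\pi_r\le ra$ and Bonferroni's inequality gives $1-\pi_r\ge ra-\binom{r}{2}b$; together with $\pi_1=1-a$, Bernoulli's inequality $(1-a)^r\ge1-ra$, and $\ln(1+x)\le x$ these sandwich, for all large $n$,
\begin{displaymath}
-2r(r-1)a^2\ \le\ \ln\pi_r-r\ln\pi_1\ \le\ r(r-1)\,b .
\end{displaymath}
Since $m a^2={\tilde \varkappa}^2\tfrac{m}{n}\tfrac1n=O\bigl(\tfrac{\ln n}{n}\bigr)\to0$, the whole argument reduces to proving $mb\to0$.

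The crux, and the step I expect to be the main obstacle, is the bound on $b$. For both fixed vertices to be non-isolated they must both lie in the community (probability ${\tilde X}({\tilde X}-1)/(n(n-1))$ given $(X,Q)$), and then the first of them must be joined to some member (conditional probability $h({\tilde X},Q)$); this yields $b\le\tfrac{2}{n^2}\E\bigl({\tilde X}^2h({\tilde X},Q)\bigr)$. The second-moment quantity $\E\bigl({\tilde X}^2h({\tilde X},Q)\bigr)$ is \emph{not} controlled by the hypotheses alone, and here I would reuse the $\phi$-truncation behind (\ref{XQ}): writing ${\tilde X}^2h=\tfrac{{\tilde X}}{\phi({\tilde X})\ln(1+{\tilde X})}\cdot\phi({\tilde X})\ln(1+{\tilde X})\,{\tilde X}h$ and invoking the monotonicity of $t\mapsto t/(\phi(t)\ln(1+t))$ together with ${\tilde X}\le n$ gives $\E\bigl({\tilde X}^2h({\tilde X},Q)\bigr)\le\tfrac{n}{\phi(n)\ln(1+n)}\,\E\bigl(X\phi(X)h(X,Q)\ln(1+X)\bigr)$, the last expectation being finite by the choice of $\phi$. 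Consequently $mb\le\tfrac{2m}{n\phi(n)\ln(1+n)}\E\bigl(X\phi(X)h(X,Q)\ln(1+X)\bigr)=O\bigl(1/\phi(n)\bigr)\to0$, which closes the sandwich and yields $\E\bigl((Y_1)_r\bigr)\to e^{rc}$, completing the proof.
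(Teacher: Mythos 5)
Your proposal is correct and follows essentially the same route as the paper's proof: the method of factorial moments, Bonferroni (inclusion--exclusion) bounds on the probability that $r$ fixed vertices are all isolated within a single community, and the key estimate $\E\bigl({\tilde X}^2h({\tilde X},Q)\bigr)=o\bigl(n/\ln n\bigr)$ obtained from the $\phi$-truncation, which is exactly what makes $mb\to 0$. The remaining differences are cosmetic: you compare $(n)_r\pi_r^m$ with $(\E Y_1)^r$ rather than expanding $\pi_r^m$ directly, and you bound $b$ by simply dropping the second vertex's non-isolation requirement, where the paper computes the exact probability and proves $h_1\le h$ --- both give the same quantity $\E\bigl(\tfrac{({\tilde X})_2}{(n)_2}h({\tilde X},Q)\bigr)$.
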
 

\begin{proof}[Proof of Lemma \ref{Lemma_2}]
Let $Y_0$ denote the number of vertices $v_i$ of $G_{[n,m]}$ 
having degree $d(v_i)=0$. We apply the method of moments to show that $Y_0$ converges in distribution to $\Lambda$. 
To this aim we establish the convergence of the factorial moments
as $n\to+\infty$
\begin{equation}
\label{2023-10-21}
\E(Y_0)_r\to \E (\Lambda)_r,
\qquad
{\text{for}}
\qquad r=1,2,\dots.
\end{equation}

 Note that $\E(\Lambda)_r=e^{rc}$.
Let us evaluate $\E(Y_0)_r$. In the proof we use  the identity
\begin{displaymath}
\binom{Y_0}{r}
=
\sum_{\{v_{i_1},\dots, v_{i_r}\}\subset {\cal V}}
{\mathbb I}_{\{d(v_{i_1})=0\}}\cdots{\mathbb I}_{\{d(v_{i_r})=0\}},
\end{displaymath}
where  both sides count the (same) number of subsets of size $r$ of vertices having degree $0$. Taking the expected values we obtain, by symmetry,
\begin{align}
\nonumber
\E (Y_0)_r
&
=r!\E\left( \sum_{\{v_{i_1},\dots, v_{i_r}\}\subset {\cal V}}{\mathbb I}_{\{d(v_{i_1})=0\}}\cdots{\mathbb I}_{\{d(v_{i_r})=0\}}\right)
\\
\label{2023-10-24+2}
&
=
r!\binom{n}{r}\PP\{d(v_1)=0,\dots, d(v_r)=0\}.
\end{align}
Furthermore, by the independence of $G_1,\dots, G_m$, we have 
\begin{displaymath}
\PP\{d(v_1)=0,\dots, d(v_r)=0\}
=P_r^m,
\qquad 
{\text{where}}
\qquad
P_r:=\PP\{d_1(v_1)=0,\dots, d_1(v_r)=0\}.
\end{displaymath}
Here $d_1(v)$ denotes the  degree of $v$ in  
$G_1=({\cal V}_1,{\cal E}_1)$  and we put $d_1(v)=0$ for $v\notin {\cal V}_1$.  

Next we approximate 
$P_r=1-\PP\{\cup_{i=1}^r\{d_1(v_i)>0\}\}$
using inclusion-exclusion inequalities
\begin{equation}
\label{X19}   
S_1-S_2 \le 1-P_r\le  S_1,
\end{equation}
where
\begin{align*}
 S_1
 &
 =\sum_{i=1}^r\PP\{d_1(v_i)>0\}=r\PP\{d_1(v_1)>0\},
\\
S_2
&
=
\sum_{1\le i<j\le r}
\PP\{d_1(v_i)>0,\, d_1(v_j)>0\}
=
\binom{r}{2}
\PP\{d_1(v_1)>0,\, d_1(v_2)>0\}.
\end{align*}
A straightforward calculation shows that
\begin{equation}\label{X19+}
 \PP \{ d_1(v_1) > 0 \}
=
\E
\left(
\frac{{\tilde X}_1}{n}
\bigl(1 - (1 - Q_i)^{({\tilde X}_1 - 1)_+}
\bigr)
\right)
=
\frac{{\tilde \varkappa}}{n}.
\end{equation}
Here $\frac{{\tilde X}_1}{n}$ accounts for  the probability of the event $v_1\in {\cal V}_1$ and $(1 - (1 - Q_i)^{({\tilde X}_1 - 1)_+}$ accounts for the probability that  given vertex of $G_1$  is not isolated in $G_1$.
Similarly, we have
\begin{displaymath}
\PP \{d_1(v_1) > 0 \cap d_1(v_2) > 0 \}
= 
\E 
\left(
\frac{({\tilde X}_1)_2}{(n)_2}
h_1({\tilde X}_1,Q_1)
\right),
\end{displaymath}
where
$h_1({\tilde X}_1,Q_1)
=
Q_1 + (1 - Q_1)\bigl(1 - (1 - Q_1)^{({\tilde X}_1 - 2)_+}\bigr)^2$
accounts for the pobability that  given a pair of vertices  of $G_1$  none of them is isolated in $G_1$. $\frac{({\tilde X}_1)_2}{(n)_2}$ accounts for  the probability that $\{v_1,v_2\}\subset {\cal V}_1$.
Invoking the inequality $h_1(x,q)\le h(x,q)$, which holds for $x\ge 2$, 
\begin{align*}
h_1(x,q)
&
=
q+(1-q)\Bigl(1-2(1-q)^{(x-2)_+}+(1-q)^{2(x-2)_+}\Bigr)
\\
&
=
1-2(1-q)^{1+(x-2)_+} +(1-q)^{1+2(x-2)_+}
\\
&
=
1-(1-q)^{1+(x-2)_+} -\Bigl((1-q)^{1+(x-2)_+} -(1-q)^{1+2(x-2)_+}
\Bigr)
\\
&
\le 
1-(1-q)^{1+(x-2)_+}
=
1-(1-q)^{(x-1)_+}
=
h(x,q),
\end{align*}
(we only need $x\ge 2$ in the second last identity) we obtain
\begin{equation}
\label{2023-10-24+21}
\PP \{d_1(v_1) > 0 \cap d_1(v_2) > 0 \}
\le 
\E 
\left(
\frac{({\tilde X}_1)_2}{(n)_2}
h({\tilde X}_1,Q_1)
\right)
=:\Delta.
\end{equation}

We show below that $
\Delta=o\left(\frac{1}{n\ln n}\right)$. This bound implies $S_2=o\left(\frac{r^2}{n\ln n}\right)$ and, together with
 (\ref{X19}) and (\ref{X19+}), it yields the approximation
\begin{displaymath}
P_r=1-{\tilde{\varkappa}}\frac{r}{n}+o\left(\frac{r^2}{n\ln n}\right).
\end{displaymath}
 
 Finally, using $(n)_r=(1+o(1))e^{r\ln n}$ 
 and 
 $1-x=e^{\ln(1-x)}=e^{-x+O(x^2)}$ 
 as 
 $x\to 0$ 
 we obtain
\begin{displaymath}
\E(Y_0)_r
=
(n)_rP_r^m
=
(1+o(1))e^{r\ln n}e^{-r{\tilde \varkappa}\frac{m}{n}+o(1)}
=
(1+o(1))e^{r{\tilde \lambda}_{m,n}+o(1)}.
\end{displaymath}
Now (\ref{2023-10-21}) follows by our assumption 
${\tilde \lambda}_{m,n}\to c$.

\bigskip

It remains to show that
$\Delta=o\left(\frac{1}{n\ln n}\right)$.
To this aim we show that 
$\E ({\tilde X}_1^2h({\tilde X}_1,Q_1))=o\left(\frac{n}{\ln n}\right)$.
We split
\begin{displaymath}
\E ({\tilde X}_1^2h({\tilde X}_1,Q_1))
=
\E (X_1^2h(X_1,Q_1){\mathbb I}_{\{X_1\le n\}})
+
n^2 \E (h(n,Q_1){\mathbb I}_{\{X_1> n\}})
=:
\Delta_1
+
n^2\Delta_2
\end{displaymath}
and treat $\Delta_1$ and $\Delta_2$ separately. By the fact that $t\to t/(\phi(t)\ln (1+t))$ is increasing,
\begin{align*}
\Delta_1
&
=
\E
\left(
\frac{X_1}{\phi(X_1)\ln(1+X_1)}
X_1\phi(X_1)
\ln(1+X_1)
{\mathbb I}_{\{X_1\le n\}}\right)
\\
&
\le
\frac{n}{\phi(n)\ln(1+n)}
\E (X_1\phi(X_1)\ln(1+X_1){\mathbb I}_{\{X_1\le n\}})
\\
&
\le \frac{\gamma}{\phi(n)}\frac{n}{\ln(1+n)}.
\end{align*}
Here $\gamma:=\E (X_1\phi(X_1)\ln(1+X_1))<\infty$. Now $\phi(n)\to+\infty$ implies $\Delta_1=o\left(\frac{n}{\ln n}\right)$.
Furthermore, by the fact that $t\to t\ln(1+t)$ is increasing and 
$t\to h(t,q)$ is nondecreasing, we have
\begin{align*}
\Delta_2
\le
\E\left(\frac{X_1\ln(1+X_1)}{n\ln(1+n)}h(X_1,Q_1){\mathbb I}_{\{X_1>n\}}\right)
=o\left(\frac{1}{n\ln n}\right).
\end{align*}
In the  last step we used the fact that $\gamma<\infty$ implies 
$\E\left(X_1\ln(1+X_1)h(X_1,Q_1){\mathbb I}_{\{X_1>n\}}\right)=o(1)$
as $n\to+\infty$. We conclude that 
$n^2\Delta_2=o\left(\frac{n}{\ln n }\right)$.
\end{proof}

\subsection{Proof of Theorem \ref{Theorem_2}}
Before the proof we introduce some notation. 
Let ${\hat q}_k^{[n,i]}$ denote the probability that
 $G_{n,i}$ has no edge with one endpoint in 
 $[k]$ and another endpoint in 
$[n]\setminus[k]$.
We denote
\begin{align*}
&
\kappa_{n,i}=\E\bigl(X_{n,i}h(X_{n,i},Q_{n,i})\bigr),
\qquad
\alpha_{n,i}
=
\E\bigl(Q_{n,i}{\mathbb I}_{\{X_{n,i}\ge 2\}}\bigr),
\\
&
\alpha_{n}=\E\bigl(Q_{n,i_*}{\mathbb I}_{\{X_{n,i_*}\ge 2\}}\bigr),
\qquad
\qquad
\
\,
\alpha
=
\E\bigl(Q{\mathbb I}_{\{X\ge 2\}}\bigr)
\end{align*}
and  observe that
\begin{displaymath}
\sum_{i=1}^m\alpha_{n,i}=m\alpha_n,
\qquad
\sum_{i=1}^m\kappa_{n,i}=m\kappa_n.
\end{displaymath}
Furthermore, we observe that the weak convergence  
$P_{n,m}\to P$ together with the convergence of moments (\ref{2023-06-03}) implies  
\begin{equation}\label{2023-10-28}
\alpha_n\to \alpha
\ \bigl(=
\E(Q{\mathbb I}_{\{X\ge 2\}})
\bigr),
\qquad
\kappa_n\to\varkappa
\ \bigl(=
\E (Xh(X,Q))
\bigr)
\qquad
{\text{as}}
\qquad
n\to+\infty.
\end{equation} 
Moreover,
the sequence 
of random variables 
$\{X_{n,i_*}h(X_{n,i_*},Q_{n,i_*})\ln(1+X_{n,i_*}), \, n\ge 1\}$ 
is uniformly integrable. That is, we have
\begin{displaymath}
\lim_{t\to+\infty}
\sup_{n}
\E
\left( 
X_{n,i_*}h(X_{n,i_*},Q_{n,i_*})\ln(1+X_{n,i_*})
{\mathbb I}_{\{X_{n,i_*}\ge t\}}
\right)
=
0.
\end{displaymath}
 Using the uniform integrability property above we can construct 
 a positive increasing function $\phi(x)$ such that 
$\lim_{x\to+\infty}\phi(x)=+\infty$ and
the sequence 
of random variables 
\linebreak
$\{X_{n,i_*}h(X_{n,i_*},Q_{n,i_*})\phi(X_{n,i_*})\ln(1+X_{n,i_*}), \, n\ge 1\}$ 
is uniformly integrable, see  \cite{Daumilas_Mindaugas2023}. We denote 
\begin{align*}
\varphi(t)
=
\sup_{n}
\E
\left( 
X_{n,i_*}h(X_{n,i*},Q_{n,i_*})\phi(X_{n,i_*})\ln(1+X_{n,i_*})
{\mathbb I}_{\{X_{n,i_*}\ge t\}}
\right).
\end{align*}
The  uniform integrability implies  $\varphi(t)\to 0$ as $t\to+\infty$.
Moreover,  one can choose $\phi(x)$ such that
 the function 
$x\to x/(\phi(x)\ln (1+x))$ 
was increasing for $x=1,2,\dots$, 
see \cite{Daumilas_Mindaugas2023}.  
Furthermore, the same argument as in (\ref{U_1}) shows that
\begin{displaymath}
\zeta_*
:
=
\sup_{n}
\E
\left( 
X_{n,i_*}Q_{n,i_*}\phi(X_{n,i_*})\ln(1+X_{n,i_*})
\right)
<\infty.
\end{displaymath}

In the proof below we use the bounds
 shown in \cite{Daumilas_Mindaugas2023} 
\begin{align}
\label{2023-10-26+1}
&
\E\bigl(X_{n,i_*}^2h(X_{n,i_*},Q_{n,i_*})\bigr)
=
o\left(\frac{n}{\ln n}\right),
\\
\label{2023-10-25}
&
\max_{1\le i\le n}\kappa^2_{n,i}
\le
\sum_{i\in[m]}\kappa^2_{n,i}
\le 
m \E (X^2_{n,i_*}h(X_{n,i_*},Q_{n,i_*}))
=o\left(\frac{mn}{\ln n}\right).
\end{align}
We note that 
for $m=O(n\ln n)$ the quantity on the right of (\ref{2023-10-25})
is $o(n^2)$.

\begin{proof}[Proof of Theorem \ref{Theorem_2}]
In the case where $\lambda^*_{m,n}\to\pm\infty$ the zero - one law of (\ref{Theorem_2+})  is shown in  Theorem 1 of \cite{Daumilas_Mindaugas2023}.
It remains to show (\ref{Theorem_2+}) in the case where $\lambda^*_{m,n}\to c$.
In this case we proceed similarly as in the proof of Theorem \ref{Theorem_1}. The only difference is that now we use Lemmas \ref{Lemma_1*} and \ref{Lemma_2*} (shown below) instead of Lemmas \ref{Lemma_1} and \ref{Lemma_2}.
\end{proof}

\begin{lem}
\label{Lemma_1*}
Let $c$ be a real number. Let $n,m\to+\infty$. Assume that conditions of Theorem \ref{Theorem_2}
hold. Assume that $\lambda^*_{n,m}\to c$. Then
\begin{equation}
\label{component2*}
\PP\{G_{[n,m]} \rm{\ has\  a \ component\  on} \  k\ \rm{vertices \ for \ some} \  2\le k\le n/2\}=o(1).
\end{equation}   
\end{lem}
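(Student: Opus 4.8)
The plan is to transcribe the proof of Lemma \ref{Lemma_1} line by line, replacing every i.i.d.\ quantity by its averaged analogue. Keeping the notation $Y_k$, $\mathcal{B}_U$, $\mathcal{D}_U$ from there and the same exponent $\beta=\max\{1-\frac{\alpha}{2\varkappa},\frac12\}$, I would bound $\PP\{\mathcal{A}_2\}\le S_0+S_1+S_2$ over the three ranges $2\le k\le\varphi_n$, $\varphi_n<k\le n^{\beta}$ and $n^{\beta}\le k\le n/2$, with a cutoff $\varphi_n\to+\infty$ to be fixed at the end. Since here $X_{n,i}\le n$, no truncation is needed and $\kappa_{n,i}$ plays the role of $\tilde\varkappa$. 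Given the product bound below, $S_1$ and $S_2$ then follow exactly as in Lemma \ref{Lemma_1}: for $S_1$ one reduces to a tail of the convergent series $\sum_k e^{k\delta}/k!$, and for $S_2$ one combines $\hat q_k^{[n,i]}\le 1-\frac{k}{n}\alpha_{n,i}$ (from the first bound of Lemma \ref{Daumilo-lema}), giving $\prod_i\hat q_k^{[n,i]}\le e^{-k\frac{m}{n}\alpha_n}$, with $\binom{n}{k}\le e^{2k+(1-\beta)k\ln n}$; the convergences $\alpha_n\to\alpha$ and $\kappa_n\to\varkappa$ of (\ref{2023-10-28}) turn the choice of $\beta$ into the negative exponent $-k(\frac{\alpha}{2\varkappa}+o(1))\ln n$, so $S_2=o(1)$.

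The engine of the argument is a non-identically-distributed version of Lemma \ref{Lemma_3}. Taking expectations in the second bound of Lemma \ref{Daumilo-lema} gives, for each $i$, $\hat q_k^{[n,i]}\le 1-\frac{k}{n}\kappa_{n,i}+\E\bigl((R_1+R_2)h(X_{n,i},Q_{n,i})\bigr)$; multiplying over $i$ and using $1+x\le e^x$ yields $\prod_{i=1}^m\hat q_k^{[n,i]}\le\exp(-\frac{km}{n}\kappa_n+\mathrm{Err}_k)$, where $\frac{km}{n}\kappa_n=k\ln n-k\lambda^*_{n,m}$ and $\mathrm{Err}_k=m\,\E\bigl((R_1+R_2)h(X_{n,i_*},Q_{n,i_*})\bigr)$. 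The decisive point is $\mathrm{Err}_k=O(k)$ uniformly for $2\le k\le n^{\beta}$: the $R_1$ part is $\le R_1 m=O(k^2m/n^2)=o(k)$, while for $R_2$ I would use $R_2\le\frac12(kX_{n,i_*}/n)^2$ together with $R_2\le kX_{n,i_*}/n$, split the expectation at $X_{n,i_*}\approx n/k$, and bound each piece by the logarithmic weight $\ln(1+X_{n,i_*})$; as $\ln(1+n/k)\ge(1-\beta)\ln n$ on this range, this gives $\E(R_2h)=O(k/(n\ln n))$ and hence $m\,\E(R_2h)=O(k)$. With $\lambda^*_{n,m}\to c$ this produces $\prod_{i=1}^m\hat q_k^{[n,i]}\le e^{-k\ln n+k\delta}$, the analogue of the first bound of Lemma \ref{Lemma_3}. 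For the second bound I would not estimate individual $\hat q_k^{[n,i]}$ from below; instead, writing $\prod_{i\in H_{\tilde t}}\hat q_k^{[n,i]}=\exp(-\sum_{i\in H_{\tilde t}}(1-\hat q_k^{[n,i]}))$, I would discard the $r\le k$ omitted indices by the trivial $0\le 1-\hat q_k^{[n,i]}\le 1$, so that $\PP\{\mathcal{I}(V_T,H_{\tilde t})\}\le e^{-k\ln n+k\delta+k}$ uniformly over $\tilde t$.

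The one genuinely new bookkeeping point is $S_0$. I would reuse the tree/partition expansion (\ref{suma}) and the edge-covering formula, so that after inserting the bound on $\PP\{\mathcal{I}(V_T,H_{\tilde t})\}$ it remains to control $S_T=\sum_{(\tilde{\mathcal{E}}_T,\tilde t)}\PP\{\mathcal{T}(\tilde{\mathcal{E}}_T,\tilde t)\}$. Because the graphs are no longer exchangeable, the sum over increasing labels $t_1<\dots<t_r$ cannot be replaced by $\binom{m}{r}$ times one term; instead, using that all summands are nonnegative, I would bound $\sum_{t_1<\dots<t_r}\prod_{j=1}^r A_{t_j}(e_j)\le\prod_{j=1}^r\bigl(\sum_{i=1}^m A_i(e_j)\bigr)$, where $A_i(e)=\E\bigl(\frac{(X_{n,i})_{e+1}}{(n)_{e+1}}Q_{n,i}^{e}\bigr)$ and I have used $v_j\ge e_j+1$ as in (\ref{Stirling}). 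Here $\sum_{i=1}^m A_i(e)=m\,\E\bigl(\frac{(X_{n,i_*})_{e+1}}{(n)_{e+1}}Q_{n,i_*}^{e}\bigr)\le\frac{2\zeta_*}{\varkappa\phi(n)}$ uniformly in $e$, the averaged analogue of (\ref{XQ}), relying only on $\zeta_*<\infty$ and $\frac{m}{n\ln n}\to\frac1\varkappa$. From here the computation mirrors Lemma \ref{Lemma_1}: Cayley's formula $|\mathbb{T}_k|=k^{k-2}$, $\binom{n}{k}\le n^k/k!$ and $\phi(n)^{-r}\le\phi(n)^{-1}$ give $S_0\le\phi(n)^{-1}\sum_{k=2}^{\varphi_n}D^k k^{k-2}$ for a constant $D$, and choosing $\varphi_n\to+\infty$ slowly enough that $D^{\varphi_n}\varphi_n^{\varphi_n}=o(\phi(n))$ forces $S_0=o(1)$.

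I expect the main obstacle to be precisely the uniform estimate $\mathrm{Err}_k=O(k)$ for all $2\le k\le n^{\beta}$: the second-moment bound (\ref{2023-10-26+1}) alone is insufficient once $k$ is a growing power of $n$, and one must genuinely exploit the logarithmic moment through the split at $X_{n,i_*}\approx n/k$, for which the convergence (\ref{2023-06-03}) and the uniform integrability furnishing $\zeta_*$ and $\phi$ are indispensable. Everything else is a faithful, if notationally heavier, transcription of the i.i.d.\ argument, the only structural change being the replacement of the exchangeability shortcut $\binom{m}{r}$ by the nonnegativity bound $\sum_{t_1<\dots<t_r}\le\prod_j\sum_i$.
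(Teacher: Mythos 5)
Your proposal is correct and follows essentially the same route as the paper's proof: the same three-range decomposition $S_0+S_1+S_2$ with $\beta=\max\{1-\frac{\alpha}{2\varkappa},\frac12\}$, the same tree/partition expansion in which the sum over distinct labels is bounded by the unrestricted product $\prod_j\bigl(\sum_{i\in[m]}A_i(e_j)\bigr)=m^r\prod_j\E\bigl(\frac{(X_{n,i_*})_{e_j+1}}{(n)_{e_j+1}}Q_{n,i_*}^{e_j}\bigr)$, the same averaged analogue of (\ref{XQ}) built from $\zeta_*$ and $\phi$, and the same $R_1$, $R_2$ estimates (your split of $\E\bigl(R_2\,h\bigr)$ at $X_{n,i_*}\approx n/k$ is precisely the bound the paper imports as formula (36) of \cite{Daumilas_Mindaugas2023}), with identical treatments of $S_1$ and $S_2$. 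The only real deviation is your handling of the $r\le k$ indices omitted in $\PP\{{\cal I}([k],H_{\tilde t})\}$ by the trivial bound $1-{\hat q}_k^{[n,i]}\le 1$ at a harmless cost $e^{k}$, where the paper instead invokes $\max_{i}\kappa_{n,i}=o(n)$ and an auxiliary sequence $\tau_n$ constraining $\varphi_n\le\tau_n$; your version is valid and slightly simpler — just note that your displayed relation $\prod_{i\in H_{\tilde t}}{\hat q}_k^{[n,i]}=\exp\bigl(-\sum_{i\in H_{\tilde t}}(1-{\hat q}_k^{[n,i]})\bigr)$ should be the inequality $\le$, via $\ln t\le t-1$.
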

\begin{proof}[Proof of Lemma \ref{Lemma_1*}]
The proof is similar to that of Lemma \ref{Lemma_1} and we use notation introduced there. Several minor modifications needed are explained in detail.
 
 Let $\beta=\max\{ 1-\frac{\alpha}{2\varkappa},\frac{1}{2}\}$. We  upper bound the probability $\PP\{{\cal A}_2\}$ by the sum 
$\PP\{{\cal A}_2\}
\le
S_0+S_1+S_2$, where $S_0,S_1,S_2$ are the same as in the proof of Lemma \ref{Lemma_1}, and show that $S_j=o(1)$ for $j=0,1,2$. The sequence
$\varphi_n\to+\infty$ as $n\to+\infty$ entering the expressions of $S_0$, $S_1$ above will be specified later.

Before the proof we collect some useful facts. 
We note that our assumption $\lambda^*_{n,m}\to c$ together with (\ref{2023-10-28}) implies 
$m=(\varkappa^{-1}+o(1))n\ln n$. In particular, we have
$m=O(n\ln n)$. 

Next, given 
$U\subset {\cal V}=[n]$ of size  $|U|\le n^{\beta}$ and $H\subset [m]$, we evaluate the probability of the event  
${\cal I}(U,H)$ that none of the graphs 
$G_{n,i}$, $i\in H$ 
has an edge connecting some 
$v\in U$ and $w\in {\cal V}\setminus U$.
Using inequality
\begin{equation}
\nonumber
{\hat q}^{[n,i]}_k
\le 
1
-
\frac{k}{n}\kappa_{n,i}
+
R_1
+
\E\left(R_2(k,X_{n,i})h(X_{n,i},Q_{n,i})\right),
\end{equation}
which follows from  (\ref{2023-05-01+1}), we obtain
for $U=[k]$ and any $H\subset [m]$ of size $h=|H|$ 
that
\begin{equation}\label{2023-10-30}
\PP\{{\cal I}([k],H)\}
=
\prod_{i\in H}{\hat q}_k^{[n,i]}
\le 
e^{-\frac{k}{n}S_1(H)
+
hR_1
+
S_2(H)}.
\end{equation}
Here we denote
\begin{displaymath}
S_1(H)=\sum_{i\in H}\kappa_{n,i},
\qquad 
S_2(H)=\sum_{i\in H}\E\left(R_2(k,X_{n,i})h(X_{n,i},Q_{n,i})\right).
\end{displaymath}
Let us evaluate the second and third term in the argument of the exponent in (\ref{2023-10-30}).
 We have 
\begin{displaymath}
S_2(H)\le S_2([m])=
m\E\left(R_2(k,X_{n,i_*})h(X_{n,i_*},Q_{n,i_*})\right)
\le 2k\frac{m}{n\ln n}.
\end{displaymath} 
In the last step we used the bound shown in formula {\color{red}\bf(36)} of 
\cite{Daumilas_Mindaugas2023}
\begin{displaymath}
\E \left(R_2(k,X_{n,i_*})h(X_{n,i_*},Q_{n,i_*})\right)
\le \frac{k}{n}\frac{2}{\ln n},
\end{displaymath}
which holds uniformly in $1\le k\le n^{\beta}$
for all sufficiently large  $n$ (say $n>n_\beta$, where $n_\beta$ depends on $\beta$ and the probability distribution of the array of random variables $\{X_{n,i}, 1\le i\le m(n), \, n=1,2,\dots\}$, see \cite{Daumilas_Mindaugas2023}). 
Furthermore, for $m=O(n\ln n)$ and $1\le k\le n^{\beta}$ we have 
 \begin{displaymath}
 hR_1\le mR_1
 =
 m\frac{k^2}{(n-k)^2}
 =
 \frac{mk^2}{n^2(1-o(1))}
 =
 O\left(k^2\frac{\ln n}{n}\right)
 =
 O\left(k\frac{\ln n}{n^{1-\beta}}\right).
\end{displaymath}
Invoking in 
(\ref{2023-10-30})
the bounds for $S_2(H)$ and $hR_1$  and using $m=(\varkappa^{-1}+o(1))n\ln n$ we obtain
\begin{equation}
\label{2023-10-30+2}
\PP\{{\cal I}([k],H)\}
\le 
e^{-\frac{k}{n}S_1(H)
+
2k\frac{m}{n\ln n}+o(k)
}
=
e^{-\frac{k}{n}S_1(H)
+
k
\left(\frac{2}{\varkappa}+o(1)\right)
}
.
\end{equation}

{\it Proof of} $S_0=o(1)$.
We first note that  for
$m=O(n\ln n)$
inequalities (\ref{2023-10-25})
imply
\begin{equation}
\label{max}
K_n:=\max_{i\in[m]}\kappa_{n,i}=o(n).
\end{equation}
Hence, there exists an integer  sequence 
$\tau_n\to+\infty$ such that $\tau_n^2K_n\le n$.
Consequently, for any $H\subset [m]$ of size 
 $|H|\ge m-\tau_n$  we have 
\begin{displaymath}
S_1(H)
\ge S_1([m])-\tau_nK_n
\ge S_1([m])-\frac{n}{\tau_n}
=m\kappa_n-\frac{n}{\tau_n}
\end{displaymath}
and, by (\ref{2023-10-30+2}),
\begin{equation}
\label{2023-10-30+3}
\PP\{{\cal I}([k],H)\}
\le 
e^{-k\frac{m}{n}\kappa_n
+
k
\left(\frac{2}{\varkappa}+o(1)\right)
}
=e^{-k\ln n+k\left(\frac{2}{\varkappa}+c+o(1)\right)}.
\end{equation}

We choose $\varphi_n$ such  that $\varphi_n\le \tau_n$ and upper bound 
$S_0=\sum_{2\le k\le \varphi_n}
\binom{n}{k}\PP\{{\cal B}_{[k]}\}$ 
 similarly as in the proof of Lemma \ref{Lemma_1}, see (\ref{suma}),
 (\ref{suma++++}) above. In view of 
 (\ref{2023-10-30+3}) inequality
  (\ref{suma++++}) now reads as follows
 \begin{equation}
 \label{2023-10-30+6}
 \PP\{{\cal B}_{[k]}\}
 \le 
 e^{-k\ln n+k\left(\frac{2}{\varkappa}+c+o(1)\right)} 
 \sum_{T\in{\mathbb T}_k}S_T,
 \qquad
 S_T=\sum_{({\tilde {\cal E}},{\tilde t})}
 \PP\{{\cal T}({\tilde {\cal E}},{\tilde t})\}.
 \end{equation}
We recall that ${\mathbb T}_k$ stands for the set of all spanning trees with the vertex set $[k]$,    
${\tilde {\cal E}}_T=({\tilde {\cal E}}_T^{(1)},\dots, {\tilde {\cal E}}_T^{(r)})$ denotes an ordered partition of the edge set of a tree $T$,
and the event ${\cal T}({\tilde {\cal E}},{\tilde t})$ is defined in the proof of Lemma
\ref{Lemma_1} above. We show below that
\begin{equation}
\label{2023-10-30+5}
S_T
\le 
\frac{1}{\phi(n)}
\sum_{r=1}^{k-1}
\left(\frac{2\zeta_*}{\varkappa}\right)^r
r^{k-1}.
\end{equation}
 Invoking this inequality in (\ref{2023-10-30+6})
 and using Cayley's formula $|{\mathbb T}_k|=k^{k-2}$
 we obtain
\begin{displaymath}
\PP\{{\cal B}_{[k]}\}
 \le 
 e^{-k\ln n+k\left(\frac{2}{\varkappa}+c+o(1)\right)} 
 \frac{R_k}{\phi(n)},
 \qquad
 R_k:=k^{k-2}\sum_{r=1}^{k-1}
\left(\frac{2\zeta_*}{\varkappa}\right)^r
r^{k-1}.
\end{displaymath}
Hence
\begin{displaymath}
S_0\le \frac{1}{\phi(n)}
\sum_{k=2}^{\varphi_n}
e^{k\left(\frac{2}{\varkappa}+c+o(1)\right)}
\frac{R_k}{k!}.
\end{displaymath}
Finally, since $\phi(n)\to+\infty$ as $n\to+\infty$, we can choose $\varphi_n\to+\infty$ growing slowly enough such that
the quantity on the right was $o(1)$. We remark that condition 
$\varphi_n\le \ln n$ used in the proof of Lemma \ref{Lemma_1} is 
now replaced now by the condition $\varphi_n\le \tau_n$.

It remains to prove (\ref{2023-10-30+5}).
Given an ordered partition 
${\tilde {\cal E}}_T=({\tilde {\cal E}}_T^{(1)},\dots, {\tilde {\cal E}}_T^{(r)})$ we denote by 
$|{\tilde {\cal E}}_T|$ the number of parts (in this case $|{\tilde {\cal E}}_T|=r$).
For ${\tilde t}=(t_{i_1},\dots, t_{i_r})$ we denote by $|{\tilde t}|$ the number of coordinates (in this case $|{\tilde t}|=r$).
We have 
\begin{align*}
S_T
=
\sum_{({\tilde {\cal E}}_T,{\tilde t})}
\PP\{{\cal T}({\tilde {\cal E}}_T,{\tilde t})\}
=
\sum_{r=1}^{k-1}
\
\frac{1}{r!}
\sum_{{\tilde {\cal E}}_T:\,|{\tilde {\cal E}}_T|=r}
S({\tilde {\cal E}}_T),
\qquad
S({\tilde {\cal E}}_T)
:=
\sum_{{\tilde t}:\, |{\tilde t}|=|{\tilde{\cal E}}_T|}
\PP\{{\cal T}({\tilde {\cal E}}_T,{\tilde t})\}.
\end{align*}
The last sum runs over the set of 
vectors ${\tilde t}=(t_1,\dots t_r)$ having distinct coordinates $t_1,\dots, t_r\in [m]$.
Proceeding as in the proof of Lemma 
\ref{Lemma_1}
we show  for 
${\tilde {\cal E}}_T=({\tilde {\cal E}}_T^{(1)},\dots, {\tilde {\cal E}}_T^{(r)})$
with $|{\tilde {\cal E}}_T^{(1)}|=e_1,
\dots, |{\tilde {\cal E}}_T^{(r)}|=e_r$ that
\begin{displaymath}
\PP\{{\cal T}({\tilde {\cal E}}_T,{\tilde t})\}
\le
\prod_{i=1}^r
\E
\left(
\frac{(X_{n,t_i})_{e_i+1}}{(n)_{e_i+1}}Q_{n,t_i}^{e_i}
\right).
\end{displaymath}
Consequently, the sum $S({\tilde{\cal E}}_T)$ is upper bounded by the sum
\begin{displaymath}
S_*({\tilde{\cal E}}_T)
:=
\sum_{t_1=1}^m\cdots\sum_{t_r=1}^m
\prod_{i=1}^r
\E
\left(
\frac{(X_{n,t_i})_{e_i+1}}{(n)_{e_i+1}}Q_{n,t_i}^{e_i}
\right)
=m^r
\prod_{i=1}^r
\E
\left(
\frac{(X_{n,i_*})_{e_i+1}}{(n)_{e_i+1}}Q_{n,i_*}^{e_i}
\right).
\end{displaymath}
Invoking the bound (which is shown similarly as (\ref{XQ}) above)
\begin{displaymath}
\E \left(X_{n,i_*}^{a+1}Q_{n,i_*}^a\right)
\le 
\frac{n^a}{\phi(n)\ln(1+n)}\zeta_*,
\quad
a=1,2,\dots,
\end{displaymath}
we obtain
\begin{displaymath}
S_*({\tilde {\cal E}}_T)
\le 
m^r
\left(\frac{\zeta_*}{n\phi(n)\ln(1+n)}\right)^r.
\end{displaymath}
Next,
using the fact that there are $\frac{(k-1)!}{e_1!\cdots e_r!}$ distinct ordered partitions
${\tilde {\cal E}}_T=({\tilde {\cal E}}_T^{(1)},\dots, {\tilde {\cal E}}_T^{(r)})$
with $|{\tilde {\cal E}}_T^{(1)}|=e_1,
\dots, |{\tilde {\cal E}}_T^{(r)}|=e_r$, we upper bound
\begin{align*}
S_T
&
\le 
\sum_{r=1}^{k-1}
\sum'_{e_1+\cdots+e_r=k-1}
\frac{(k-1)!}{e_1!\cdots e_r!} m^r 
\left(\frac{\zeta_*}{n\phi(n)\ln(1+n)}\right)^r
\\
&
\le 
\sum_{r=1}^{k-1}
m^r 
\left(\frac{\zeta_*}{n\phi(n)\ln(1+n)}\right)^r
r^{k-1}.
\end{align*}
Here the sum $\sum'_{e_1+\dots+e_r=k-1}$ runs over the set of vectors
$(e_1,\dots, e_r)$ having integer valued coordinates $e_i\ge 1$ 
satisfying $e_ 1+\cdots+e_r=k-1$. 
Invoking the inequality $\frac{m}{n\ln (1+n)}\le \frac{2}
{\varkappa}$ (which holds for large $n$) we obtain (\ref{2023-10-30+5}).

\bigskip

{\it Proof of} $S_1=o(1)$.
We observe that the event ${\cal D}_k$ defined in the proof of Lemma \ref{Lemma_1} and the event ${\cal I}([k],[m])$ are the same.
Hence,  by (\ref{2023-10-30+2}), (\ref{2023-10-30+3}),
\begin{displaymath}
\PP\{{\cal D}_k\}
=
\PP\{{\cal I}([k],[m])\}\le e^{-k\frac{m}{n}\kappa_n
+
k
\left(\frac{2}{\varkappa}+o(1)\right)
}
\le 
e^{-k\ln n+k\left(\frac{2}{\varkappa}+c+o(1)\right)}.
\end{displaymath}
We conclude that
\begin{displaymath}
S_1
=
\sum_{\varphi_n\le k\le n^{\beta}}
\binom{n}{k}
\PP\{{\cal D}_k\}
\le 
\sum_{k\ge \varphi_n}
\frac{e^{k\left(\frac{2}{\varkappa}+c+o(1)\right)}}{k!}
=o(1)
\qquad
{\text{as}}
\qquad 
n\to+\infty
\end{displaymath}
 because the series  $\sum
\frac{e^{k\left(\frac{2}{\varkappa}+c+1\right)}}{k!}$ converges and  $\varphi_n\to+\infty$.

\bigskip

{\it Proof of} $S_2=o(1)$. From (\ref{2023-05-03}) we have for $k\le n/2$
\begin{align*}
{\hat q}^{[n,i]}_k
&
=
\PP\{X_{n,i}\le 1\}
+
\E \left(q_k(X_{n,i},Q_{n,i}){\mathbb I}_{\{X_{n,i}\ge 2\}}\right)
\\
&
\le 
1-2\alpha_{n,i}\frac{k(n-k)}{n(n-1)}
\le 
1-\alpha_{n,i}\frac{k}{n}.
\end{align*}
Consequently,
\begin{displaymath}
\PP\{{\cal D}_k\}
=
\prod_{i\in [m]}
{\hat q}^{[n,i]}_k
\le 
e^{-\frac{k}{n}\sum_{i\in [m]}\alpha_{n,i}}
=
e^{-k\frac{m}{n}\alpha_n}.
\end{displaymath}
Next, proceeding as in (\ref{2023-10-16}) above, we estimate using $\frac{m}{n}=\frac{\ln n}{\varkappa}(1+o(1))$ and 
$\alpha_n=\alpha(1+o(1))$
\begin{displaymath}
\binom{n}{k}
\PP\{{\cal D}_k\}
\le 
e^{2k+(1-\beta)k\ln n-\alpha_nk\frac{m}{n}}
=
e^{-k\left(\frac{\alpha}{\varkappa}+\beta-1+o(1)\right)\ln n}
\le 
e^{-k\left(\frac{\alpha}{2\varkappa}+o(1)\right)\ln n}.
\end{displaymath}
In the last step we used $\beta\ge 1-\frac{\alpha}{2\varkappa}$. Finally, we have
 $S_2\le 
 \sum_{k\ge n^{\beta}}
 e^{-k\left(\frac{\alpha}{2\varkappa}+o(1)\right)\ln n}=o(1)$ as $n\to+\infty$.
 \end{proof}

\begin{lem}
\label{Lemma_2*} Let $c$ be a real number. let $n,m\to+\infty$. Assume that conditions of Theorem \ref{Theorem_2}
hold. Assume that $\lambda^*_{n,m}\to c$. Then the number of vertices of degree $0$ in $G_{[n,m]}$ converges in distribution to the Poisson distribution with the mean $e^{c}$.
\end{lem}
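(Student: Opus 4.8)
The plan is to prove Lemma \ref{Lemma_2*} by the method of moments, exactly mirroring the proof of Lemma \ref{Lemma_2} but accounting for the fact that the contributing graphs $G_{n,1},\dots,G_{n,m}$ are no longer identically distributed. Let $Y_0$ denote the number of degree-$0$ vertices. As in Lemma \ref{Lemma_2}, the identity $\binom{Y_0}{r}=\sum_{\{v_{i_1},\dots,v_{i_r}\}}\prod_{j=1}^r{\mathbb I}_{\{d(v_{i_j})=0\}}$ together with symmetry gives $\E(Y_0)_r=(n)_r\,\PP\{d(v_1)=0,\dots,d(v_r)=0\}$, and I would show $\E(Y_0)_r\to e^{rc}=\E(\Lambda)_r$ for each $r=1,2,\dots$, which by the method of moments yields convergence in distribution to $\mathcal{P}(e^c)$.

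The main structural difference is that independence of $G_{n,1},\dots,G_{n,m}$ no longer factors into a single probability raised to the power $m$. Instead I would write $\PP\{d(v_1)=0,\dots,d(v_r)=0\}=\prod_{i\in[m]}P_{r,i}$, where $P_{r,i}=\PP\{d_i(v_1)=0,\dots,d_i(v_r)=0\}$ and $d_i$ is the degree in $G_{n,i}$. By inclusion–exclusion (as in (\ref{X19})) I would approximate $1-P_{r,i}=r\,\PP\{d_i(v_1)>0\}+O\!\bigl(\binom{r}{2}\Delta_i\bigr)$, where the single-vertex term satisfies $\sum_i\PP\{d_i(v_1)>0\}=\frac{m}{n}\kappa_n$ (the analogue of (\ref{X19+}), now summed over the communities), and $\Delta_i:=\E\bigl(\tfrac{(X_{n,i})_2}{(n)_2}h(X_{n,i},Q_{n,i})\bigr)$ is the pairwise term. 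Taking the product and using $1-x=e^{-x+O(x^2)}$ I would obtain
\begin{displaymath}
\prod_{i\in[m]}P_{r,i}
=
\exp\Bigl(-r\tfrac{m}{n}\kappa_n+o(1)\Bigr)
=
e^{-r(\ln n-\lambda^*_{n,m})+o(1)},
\end{displaymath}
so that $\E(Y_0)_r=(n)_r\prod_i P_{r,i}=(1+o(1))e^{r\lambda^*_{n,m}+o(1)}\to e^{rc}$ by the hypothesis $\lambda^*_{n,m}\to c$.

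The step requiring genuine care is controlling the error terms uniformly across the $m$ communities, since $m=\Theta(n\ln n)$ summands each of order $O(1/(n\ln n))$ must be shown to contribute $o(1)$ after summation and exponentiation. The pairwise sum is the crux: I would bound $\sum_{i\in[m]}\Delta_i\le\frac{1}{(n)_2}\sum_{i\in[m]}\E\bigl(X_{n,i}^2h(X_{n,i},Q_{n,i})\bigr)=\frac{m}{(n)_2}\E\bigl(X_{n,i_*}^2h(X_{n,i_*},Q_{n,i_*})\bigr)$ and invoke the already-established bound (\ref{2023-10-26+1}), namely $\E\bigl(X_{n,i_*}^2h(X_{n,i_*},Q_{n,i_*})\bigr)=o(n/\ln n)$; together with $m=O(n\ln n)$ this forces $\binom{r}{2}\sum_i\Delta_i=o(1)$. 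I would similarly verify that the second-order terms from the expansion $1-x=e^{-x+O(x^2)}$, i.e. $\sum_i(\PP\{d_i(v_1)>0\})^2$, are negligible using (\ref{2023-10-25}), and that the truncation $X_{n,i}=\min\{X_i,n\}$ contributes only $o(1)$ by the uniform-integrability construction of $\phi$ recorded before the proof of Theorem \ref{Theorem_2}. Once these moment bounds are in place the argument closes exactly as in Lemma \ref{Lemma_2}.
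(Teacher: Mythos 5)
Your proposal is correct and follows essentially the same route as the paper's own proof: method of moments, the factorization $\prod_{i\in[m]}P_{r,i}$ over the non-identically distributed communities, per-community inclusion--exclusion, and control of the aggregated error terms via (\ref{2023-10-26+1}) and (\ref{2023-10-25}) together with $m=O(n\ln n)$. The only cosmetic differences are that the paper makes the uniform smallness $\max_{i\in[m]}\kappa_{n,i}=o(n)$ explicit (its (\ref{max})) whereas your bounds deliver it implicitly, and your remark about the truncation $X_{n,i}=\min\{X_i,n\}$ is vacuous here, since in the setting of Theorem \ref{Theorem_2} the variables $X_{n,i}$ already take values in $\{0,1,\dots,n\}$.
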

\begin{proof}[Proof of Lemma \ref{Lemma_2*}]
The proof is similar to that of Lemma \ref{Lemma_2}. The only (minor) difference is in the evaluation of the probability 
$p:=\PP\{d(v_1)=0,\dots,d(v_r)=0\}$ of (\ref{2023-10-24+2}).

Let us evaluate the probability $p$.
For $k=1,\dots, m$ let $d_k(v)$ denote the degree of $v$ in $G_{n,k}=({\cal V}_{n,k},{\cal E}_{n,k})$ and we put $d_k(v)=0$ for $v\notin {\cal V}_{n,k}$.
We denote $P_{r,k}=\PP\{d_k(v_1)=0,\dots, d_k(v_r)=0\}$.

By the independence of $G_{n,1},\dots, G_{n,m}$, we have 
$p=\prod_{k\in[m]}P_{r,k}$.  Furthermore, we have, by the inclusion-exclusion principle (cf. (\ref{X19})),
\begin{equation}
\label{2023-10-26}
S_{1,k}-S_{2,k}\le 1-P_{r,k}\le S_{1,k},
\end{equation}
where the sums $S_{1,k}$ and $S_{2,k}$ are defined as in the proof of Lemma \ref{Lemma_2}, but with $d_1(v_i), d_1(v_j)$ replaced by $d_k(v_i), d_k(v_j)$. Proceeding as in the proof of 
(\ref{X19+}), (\ref{2023-10-24+21}) we evaluate
$S_{1,k}= \kappa_{n,k}\frac{r}{n}$ and estimate 
\begin{displaymath}
S_{2,k}\le \frac{r^2}{2n^2}\Delta_{n,k},
\qquad
\Delta_{n,k}:=\E \left((X_{n,k})_2h(X_{n,k},Q_{n,k})\right).
\end{displaymath}
Now (\ref{2023-10-26}) yields the approximation
\begin{align}
P_{r,k}
=
1-\frac{r}{n}\kappa_{n,k}+\theta_{n,k}\frac{r^2}{2n^2}\Delta_{n,k}
=:1-a_k+b_k,
\end{align}
where $|\theta_{n,k}|\le 1$.
We note that 
\begin{equation}
\label{Delta+}
\max_{k\in [m]} \Delta_{n,k}
\le 
\sum_{k\in[m]}\Delta_{n,k}
=m\E\left((X_{n,i_*})_2h(X_{n,i_*},Q_{n,i_*})\right)=o(n^2),
\end{equation}
where the last bound follows from (\ref{2023-10-26+1}). Hence $b_k=o(1)$ uniformly in $k$ as $n\to+\infty$. Similarly, (\ref{max}) implies that $a_k=o(1)$ uniformly in $k$ as $n\to+\infty$.

Next,
using 
 $1+t=e^{\ln(1+t)}$ and inequality
 $t-t^2\le \ln(1+t)\le t$ valid for $|t|\le 0.5$ we write $P_{r,k}$ in the form
 \begin{align*}
 P_{r,k}=e^{\ln(1-a_k+b_k)}
 =
 e^{-a_k+b_k+R},
 \qquad
 {\text{where}}
 \qquad
 R\le (-a_k+b_k)^2\le 2a_k^2+2b_k^2.
 \end{align*}
 Furthermore, denoting $R_{r,k}:=b_k+R$ 
 we obtain 
 \begin{displaymath}
 P_{r,k}=e^{-a_k+R_{r,k}}=e^{-\frac{r}{n}\kappa_{n,k}+R_{r,k}},
 \end{displaymath}
 where $R_{r,k}$ satisfies
 \begin{displaymath}
 |R_{r,k}|
 \le 
 \frac{r^2}{2n^2}\Delta_{n,k}
 +
 2\left(\frac{r}{n}\kappa_{n,k}\right)^2
 +
2\left(\frac{r^2}{2n^2}\Delta_{n,k}\right)^2.
 \end{displaymath}
 Let us analyse the remainder term $R_{r,k}$.  By (\ref{Delta+}),  we have for large $n$ that
 $\max_{ k\in[m]}\frac{r^2}{n^2}\Delta_{n,k}<~1$.
Hence
\begin{displaymath}
 |R_{r,k}|
 \le 
 \frac{r^2}{n^2}\Delta_{n,k}
 +
 2\left(\frac{r}{n}\kappa_{n,k}\right)^2.
 \end{displaymath}
 Consequently
 \begin{displaymath}
 \left|\sum_{k\in [m]}R_{r,k}\right|
 \le
\frac{r^2}{n^2}\sum_{k\in [m]}\Delta_{n,k}
+
2\frac{r^2}{n^2}\sum_{k\in[m]}\kappa^2_{n,k}
=o(1).
\end{displaymath}
 In the last step we used (\ref{2023-10-25}) and the bound 
 $\sum_{k\in [m]}\Delta_{n,k}=o(n^2)$ of  (\ref{Delta+}).
 
 Finally, we evaluate the probability 
 \begin{displaymath}
 p
 =
 \prod_{k\in[m]}P_{r,k}
 =
 e^{-\frac{r}{n}\sum_{k\in[m]}\kappa_{n,k}+\sum_{k\in [m]}R_{r,k}}
 =
 e^{-r\frac{m}{n}\kappa_n+o(1)}.
 \end{displaymath}
The respective factorial moment
\begin{displaymath}
\E(Y_0)_r
=
(n)_rp
=
(1+o(1))e^{r\ln n-r\frac{m}{n}\kappa_n+o(1)}
=
(1+o(1))e^{r\lambda^*_{m,n}+o(1)}
\end{displaymath}
converges to $e^{rc}$ since $\lambda^*_{m,n}\to c$ as 
$n\to+\infty$.
\end{proof}

\end{document}